\title{An upper bound for the solving degree in terms of the degree of regularity}
\author{Flavio Salizzoni}
\date{}
\theoremstyle{definition}
\newtheorem{theorem}{Theorem}[section]
\newtheorem{proposition}[theorem]{Proposition}
\newtheorem{lemma}[theorem]{Lemma}
\newtheorem{definition}[theorem]{Definition}
\newtheorem{example}[theorem]{Example}
\newtheorem{remark}[theorem]{Remark}
\newtheorem{corollary}[theorem]{Corollary}
\newcommand{\N}{\mathbb N}
\newcommand{\F}{\mathbb F}
\newcommand{\Ff}{\mathcal F}
\newcommand{\sd}{\mathrm{sd}_{\sigma}}
\newcommand{\dreg}{\mathrm{d}_{\mathrm{reg}}}
\newcommand{\Vd}{V_{\mathcal{F},d}}
\newcommand{\Vdd}{V_{\mathcal{F},d+1}}
\newcommand{\lfd}{\mathrm{Lfd}}
\begin{document}

\maketitle
\begin{abstract}
    The solving degree is an important parameter for estimating the complexity of solving a system of polynomial equations.
    In this paper, we provide an upper bound for the solving degree in terms of the degree of regularity. We also show that this bound is optimal. As a direct consequence, we prove an upper bound for the last fall degree and a Macaulay bound.
\end{abstract}
\section{Introduction}
Many problems can be modeled through a system of polynomial equations. Solutions to such a system can be found in polynomial time once we know a reduced Gr\"obner basis of the system. Nowadays, there are many algorithms for the computation of Gr\"obner bases, among which the most efficient ones belong to the family of linear-algebra based algorithms. This family includes among others F4, F5, and XL algorithms \cite{Courtois,F4,F5}. Estimating their computational complexity has become increasingly important because of their applications in post-quantum cryptography.

Given a linear-algebra based algorithm $H$ for solving a system of polynomial equations $\Ff$, the solving degree for $\Ff$ with respect to $H$ is defined as the largest degree of a polynomial that appears while we solve $\Ff$ using $H$. The interest in this parameter stems from the fact that an upper bound on it, results in an upper bound on the complexity of these algorithms. However, computing explicitly the solving degree is in general extremely difficult. For this reason, it is useful to provide upper bounds for the solving degree in terms of other parameters, such as the degree of regularity, the last fall degree, and the Castelnuovo-Mumford regularity. For instance, in \cite{MHKY18} Huang, Kosters, Yang, and Yeo described an algorithm for solving polynomial systems whose complexity is bounded from above by the last fall degree. In \cite{CG}, Caminata and Gorla proved that the solving degree for a family $\Ff=\{f_1,\dots,f_k\}$ is upper bounded by the Castelnuovo-Mumford regularity of the family $\Ff^h=\{f_1^h,\dots,f_k^h\}$, where $f_i^h$ is the homogenization of $f_i$.

The degree of regularity $\dreg(\Ff)$ was introduced by Bardet, Faugère, and Salvy in \cite{Bar04,BFS}, where they also provide an upper bound for this degree for cryptographic semi-regular sequences. Even if many authors use the degree of regularity as a heuristic upper bound for the solving degree, it is not completely clear what is the exact relation between these two parameters. For instance, see \cite[Examples 4.7]{CG} and \cite[Examples 2.2, 2.3 and 2.4]{BNGMT}. In \cite{ST,Tenti}, Semaev and Tenti proved that under certain conditions the solving degree is bounded from above by $2\dreg(\Ff)-2$ (see \cite[Corollary 3.67]{Tenti} and the discussion below \cite[Theorem 2.1]{ST}).

In this paper we consider the solving degree $\sd$ for a family of algorithms that contains among others MutantXL \cite{Buchmann09} and MXL2 \cite{Buchmann08}. The main result is the following bound for the solving degree in terms of the degree of regularity.
\begin{theorem}\label{theorem:boundsolvdeg}
Let $\mathcal{F}=\{f_1,\dots,f_n\}$ be a family of polynomials such that $\max\deg(f_i)\leq \dreg(\mathcal{F})$ and let $\sigma$ be a degree-compatible term order. Then, $\sd(\Ff)\leq \dreg(\Ff)+1$.    
\end{theorem}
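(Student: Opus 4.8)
The plan is to translate the statement into a linear‑algebra assertion about the spaces $V_{\Ff,d}$ and then extract everything from the ideal of leading forms. Write $R$ for the ambient polynomial ring over $\K$, $I=(\Ff)$, and, for each $i$, let $f_i^{\mathrm{top}}$ be the homogeneous component of $f_i$ of maximal degree; set $J=(f_1^{\mathrm{top}},\dots,f_n^{\mathrm{top}})$. We may assume $\dreg(\Ff)<\infty$ (otherwise the bound is vacuous), so $J_d=R_d$ for $d\ge\dreg(\Ff)$ and $R/J$ is Artinian with top nonvanishing degree $\dreg(\Ff)-1$. Recall that, since the algorithms in the family under consideration use the mutant strategy, $V_{\Ff,d}$ is the smallest $\K$‑subspace of $R_{\le d}$ containing all $tf_i$ with $\deg(tf_i)\le d$ and stable under multiplication by a variable as long as the result stays in degree $\le d$; in particular $V_{\Ff,d}\subseteq I_{\le d}$, and $\sd(\Ff)\le d$ once $V_{\Ff,d}$ contains a $\sigma$‑Gr\"obner basis of $I$. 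So the goal is to prove that $V_{\Ff,\dreg(\Ff)+1}$ contains a $\sigma$‑Gr\"obner basis of $I$.

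First I would record the elementary fact that for every $d\ge\dreg(\Ff)$ the projection $R_{\le d}\twoheadrightarrow R_d$ restricts to a surjection $V_{\Ff,d}\to R_d$: if $m$ is a monomial of degree $d$, write $m=\sum_i h_if_i^{\mathrm{top}}$ with $h_i$ homogeneous, and note that $\sum_i h_if_i\in V_{\Ff,d}$ has $m$ as its degree‑$d$ part. From this, $I_{\le d}=V_{\Ff,d}+I_{\le d-1}$ for $d\ge\dreg(\Ff)$, hence $R_{\le d}/I_{\le d}\cong R_{\le\dreg(\Ff)-1}/I_{\le\dreg(\Ff)-1}$; so $R/I$ is Artinian and $\mathrm{in}_\sigma(I)$ contains all monomials of degree $\ge\dreg(\Ff)$. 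Consequently every minimal monomial generator of $\mathrm{in}_\sigma(I)$ has degree $\le\dreg(\Ff)$, and it is enough to produce, for each such generator $m$, an element $g\in V_{\Ff,\dreg(\Ff)+1}$ with $\mathrm{in}_\sigma(g)=m$; the case $\deg m=\dreg(\Ff)$ follows at once from the surjectivity above, so only the generators of degree $<\dreg(\Ff)$ remain. For these I would prove the stronger statement $V_{\Ff,d}=I_{\le d}$ for all $d\ge\dreg(\Ff)+1$, which by the identity just obtained reduces to $I_{\le\dreg(\Ff)-1}\subseteq V_{\Ff,\dreg(\Ff)+1}$ (and which, as a by‑product, gives the announced upper bound on the last fall degree).

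The core of the proof, and the only point where the hypothesis $\max\deg f_i\le\dreg(\Ff)$ is needed, is the control of the degree in which cancellations — mutants — can first appear. Since $R/J$ is Artinian with top degree $\dreg(\Ff)-1$, one has $\mathrm{reg}(R/J)=\dreg(\Ff)-1$, so the first syzygy module of a \emph{minimal} generating set of $J$ is generated in degrees $\le\dreg(\Ff)+1$; because $\max\deg f_i\le\dreg(\Ff)$, the (possibly redundant) generators $f_1^{\mathrm{top}},\dots,f_n^{\mathrm{top}}$ themselves have all redundancy syzygies in degree $\le\dreg(\Ff)$, so $\mathrm{Syz}(f_1^{\mathrm{top}},\dots,f_n^{\mathrm{top}})$ is generated in degrees $\le\dreg(\Ff)+1$. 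Choose homogeneous generators $\sigma_1,\dots,\sigma_r$ of this syzygy module, of internal degrees $e_k\le\dreg(\Ff)+1$, and set $w_k=\sum_i\sigma_{k,i}f_i$; the top parts cancel, so $\deg w_k\le e_k-1\le\dreg(\Ff)$, while $w_k\in V_{\Ff,\dreg(\Ff)+1}$ because each $\sigma_{k,i}f_i$ has degree $\le\dreg(\Ff)+1$. Now take $f\in I_{\le\dreg(\Ff)-1}$ with a representation $f=\sum_i g_if_i$ whose term‑degree $M:=\max_i\deg(g_if_i)$ is minimal. If $M\le\dreg(\Ff)+1$ we are done. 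If $M\ge\dreg(\Ff)+2$, the degree‑$M$ part of $f$ vanishes, so $\bigl((g_i)_{M-\deg f_i}\bigr)_i$ is a degree‑$M$ syzygy of the $f_i^{\mathrm{top}}$; expanding it in the $\sigma_k$ rewrites $f=\sum_i g_i'f_i+\sum_k p_kw_k$ with all term‑degrees of the first sum $\le M-1$ and $\deg(p_kw_k)\le M-1$, so $f$ admits a representation over the enlarged system $\Ff\cup\{w_1,\dots,w_r\}$ of term‑degree $\le M-1$. Since the leading forms of $\Ff\cup\{w_k\}$ still generate an ideal containing $\mathfrak{m}^{\dreg(\Ff)}$, the same syzygy bound applies to this enlarged system, and iterating the reduction $M\mapsto M-1$ finitely many times one arrives at a representation of $f$ over $\Ff$ together with finitely many mutants — all of degree $\le\dreg(\Ff)$ and lying in $V_{\Ff,\dreg(\Ff)+1}$ — of term‑degree $\le\dreg(\Ff)+1$. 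As $V_{\Ff,\dreg(\Ff)+1}$ is stable under multiplication within degree $\dreg(\Ff)+1$, this yields $f\in V_{\Ff,\dreg(\Ff)+1}$.

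Combining the two halves, $V_{\Ff,\dreg(\Ff)+1}=I_{\le\dreg(\Ff)+1}$ contains a polynomial with leading term $m$ for every minimal generator $m$ of $\mathrm{in}_\sigma(I)$, hence a $\sigma$‑Gr\"obner basis of $I$, so $\sd(\Ff)\le\dreg(\Ff)+1$. I expect the last paragraph to be the main obstacle: the syzygy reduction only lowers the term‑degree of a representation by one at a time and produces the auxiliary piece $\sum_k p_kw_k$ at degree $M-1$, so one has to be careful — either by the bookkeeping with the growing set of mutants sketched above, or by a cleaner induction — to see that the descent terminates exactly at degree $\dreg(\Ff)+1$. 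It is precisely this step that breaks without the hypothesis: if some $\deg f_i>\dreg(\Ff)$, the form $f_i^{\mathrm{top}}$ is a redundant generator of $J$ of high degree, its redundancy syzygy (and the mutant it produces) can have degree close to $\max\deg f_i$, and the bound $\dreg(\Ff)+1$ genuinely fails, as one sees already for $\Ff=\{x^2,y^2,x^6+xy\}$.
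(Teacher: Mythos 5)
Your proof is correct, and the engine driving its key step is genuinely different from the paper's. Both arguments share the same skeleton: since $\sigma$ is degree-compatible and $(\Ff^{\mathrm{top}})_e=\mathbb{K}[x_1,\dots,x_n]_e$ for $e\geq\dreg(\Ff)$, the reduced Gr\"obner basis lives in degree at most $\dreg(\Ff)$ (Remark \ref{remark:grobnerdreg}), so everything reduces to showing that every $f\in(\Ff)$ of degree at most $\dreg(\Ff)+1$ lies in $V_{\Ff,\dreg(\Ff)+1}$ (Theorem \ref{theorem:finVdd}). For that inclusion the paper runs a self-contained rewriting argument: it fixes a set $\mathcal{P}_{\Ff}\subseteq V_{\Ff,\dreg(\Ff)}$ of polynomials whose top parts exhaust the monomials of degree $\dreg(\Ff)$, encodes representations of $f$ as formal sets of triples, and descends not on the term-degree $M$ but on the largest monomial multiplier in degree-lexicographic order, exchanging one offending term $m_2g_2$ for a term with a strictly smaller multiplier via $x_{\bar i}g_2-x_{\bar j}p$ with $p\in\mathcal{P}_{\Ff}$. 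You instead bound the degrees of the generators of $\mathrm{Syz}(f_1^{\mathrm{top}},\dots,f_k^{\mathrm{top}})$ by $\dreg(\Ff)+1$ using $\mathrm{reg}$ of the Artinian algebra $\mathbb{K}[x_1,\dots,x_n]/(\Ff^{\mathrm{top}})$ together with redundancy syzygies of degree at most $\max\deg f_i\leq\dreg(\Ff)$ --- which is exactly where the hypothesis enters, in both proofs --- and descend on $M$ while accumulating mutants $w_j\in V_{\Ff,\dreg(\Ff)+1}$ of degree at most $\dreg(\Ff)$. The bookkeeping you flag as the main obstacle does close up: every enlarged system still consists of elements of $V_{\Ff,\dreg(\Ff)+1}$ of degree at most $\dreg(\Ff)$ whose top parts generate everything in degree $\dreg(\Ff)$, so the same syzygy bound applies at every stage, each new mutant again lies in $V_{\Ff,\dreg(\Ff)+1}$ because that space is closed under admissible monomial multiplication, and $M$ strictly drops, so the descent stops at $M\leq\dreg(\Ff)+1$. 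One cosmetic point: your claimed isomorphism between the truncated quotients is a priori only a surjection (new low-degree elements of the ideal may appear at larger $d$ --- that is precisely the fall-degree phenomenon), but you only use finite-dimensionality and the fact that $\mathrm{in}_{\sigma}((\Ff))$ contains all monomials of degree at least $\dreg(\Ff)$, both of which you establish directly. As for what each route buys: yours is shorter and conceptual, exhibiting the theorem as a Castelnuovo--Mumford regularity statement about $\mathbb{K}[x_1,\dots,x_n]/(\Ff^{\mathrm{top}})$; the paper's is longer but entirely elementary, avoiding homological algebra and making the monomial-exchange step explicit enough to read off the algorithmic content.
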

We prove that this bound is optimal showing that for every $1<d\in\N$ there exists a family $\Ff$ such that $\dreg(\Ff)=d$ and $\sd(\Ff)=d+1$. As a consequence of Theorem \ref{theorem:boundsolvdeg}, we obtain that the last fall degree is always bounded from above by the degree of regularity. 
Finally, we improve the Macaulay bound for a family of polynomials with finite degree of regularity.
\section{Preliminaries}
Let $\mathbb{K}$ be a field (not necessarily finite). For an ideal $\mathcal{I}$ in $\mathbb{K}[x_1,\dots,x_n]$, we denote by $\mathcal{I}_d$ the set of all homogeneous polynomial of degree $d$ in $\mathcal{I}$. Let $\Ff=\{f_1,\dots,f_k\}$ be a finite family of $\mathbb{K}[x_1,\dots,x_n]$ and let $(\Ff)$ be the ideal generated by $\Ff$.
\begin{definition}
    Fix a term order $\sigma$. We denote by $\mathrm{Gbd}_{\sigma}(\Ff)$ the maximum degree of an element of a reduced Gr\"obner basis of $(\Ff)$ with respect to $\sigma$.
\end{definition}

We now recall the definition of degree of regularity, as was originally given in \cite{Bar04,BFS} by Bardet, Faugère, and Salvy. For a polynomial $p\in\mathbb{K}[x_1,\dots,x_n]$, let $p^{\mathrm{top}}$ be the homogeneous part of largest degree of $p$.
\begin{definition}
 The degree of regularity $\dreg(\Ff)$ of $\Ff$ is the smallest integer $d$ for which $$\left(\Ff^{\mathrm{top}}\right)_d=(\mathbb{K}[x_1,\dots,x_n])_d,$$
 where $\Ff^{\mathrm{top}}=\left\{f_1^{\mathrm{top}},\dots,f_k^{\mathrm{top}}\right\}$. If such a $d$ does not exist, we set $\dreg(\Ff)=+\infty$.
\end{definition}
\begin{remark}\label{remark:grobnerdreg}
If the term order $\sigma$ is degree-compatible, then $\mathrm{Gbd}_{\sigma}(\Ff)\leq \dreg(\Ff)$, as it has been already observed in \cite[Remark 4.6]{CG} by Caminata and Gorla. 
\end{remark}
Another important parameter for a family $\Ff$ is the last fall degree, which was defined for the first time in \cite{MHKY15} by Huang, Kosters, and Yeo.
\begin{definition}
    For an integer $d$, we denote by $V_{\Ff,d}$ the smallest $\mathbb{K}$-linear space with respect to inclusion such that
    \begin{itemize}
        \item $\{f\in\Ff:\deg(f)\leq d\}\subseteq V_{\Ff,d}$,
        \item for every $m$ monomial in $\mathbb{K}[x_1,\dots,x_n]$ and every $f\in V_{\Ff,d}$ such that $\deg(mf)\leq d$, then $mf\in V_{\Ff,d}$.
    \end{itemize}
    We set $V_{\Ff,+\infty}=(\Ff)$. The last fall degree $\lfd(\Ff)$ is the minimal $d\in\mathbb{Z}_{>0}\cup \{+\infty\}$ such that $f\in V_{\Ff,\max\{d,\deg(f)\}}$ for all $f\in(\Ff)$.
\end{definition}
The solving degree is usually defined starting from the Macaulay matrix $M(\Ff,d)$ to which we add a certain number of rows, see for instance \cite[Section 2.2]{GMP22} for an explicit discussion. However, for the purpose of this paper, it is more convenient for us to define the solving degree directly in terms of the spaces $V_{\Ff,d}$. Notice that the following definition is equivalent to the standard one, as shown in \cite[Theorem 1]{GMP22}.   
\begin{definition}
    Let $\sigma$ be a degree-compatible term order. The solving degree $\sd(\Ff)$ of $\Ff$ is the smallest $d$ for which $V_{\Ff,d}$ contains a Gr\"obner basis of $\Ff$ with respect to $\sigma$.
\end{definition}
In \cite{CG22}, Caminata and Gorla proved the following result that highlights a relation between the solving degree and the last fall degree.
\begin{theorem}\label{theore:lfdgorla}
    Let $\Ff$ be a polynomial system and let $\sigma$ be a degree-compatible term order. Then
    $$\sd(\Ff)=\max\{\lfd(\Ff),\mathrm{Gbd}_{\sigma}(\Ff)\}.$$
\end{theorem}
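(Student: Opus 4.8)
The plan is to prove the two inequalities $\sd(\Ff)\le D$ and $\sd(\Ff)\ge D$ separately, where $D=\max\{\lfd(\Ff),\mathrm{Gbd}_{\sigma}(\Ff)\}$. The one structural fact I would establish first is the \emph{monotonicity} of the spaces: if $d_1\le d_2$ then $V_{\Ff,d_1}\subseteq V_{\Ff,d_2}$. This follows from the minimality in the definition of $V_{\Ff,d_1}$: the set $W=\{g\in V_{\Ff,d_2}:\deg(g)\le d_1\}$ is a $\K$-linear space, it contains every $f\in\Ff$ with $\deg(f)\le d_1$, and it is closed under multiplication by a monomial whenever the product has degree at most $d_1$; hence $V_{\Ff,d_1}\subseteq W\subseteq V_{\Ff,d_2}$. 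I will also use constantly that, since $\sigma$ is degree-compatible, $\deg(\mathrm{lt}_{\sigma}(g))=\deg(g)$ for every $g$ and every element of $V_{\Ff,d}$ has degree at most $d$.

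For the upper bound $\sd(\Ff)\le D$ I would exhibit a Gr\"obner basis inside $V_{\Ff,D}$, namely the reduced Gr\"obner basis $B$ of $(\Ff)$ with respect to $\sigma$. Each $b\in B$ lies in $(\Ff)$ and satisfies $\deg(b)\le\mathrm{Gbd}_{\sigma}(\Ff)\le D$. Applying the defining property of $\lfd(\Ff)$ to $f=b$ gives $b\in V_{\Ff,\max\{\lfd(\Ff),\deg(b)\}}$; since $\max\{\lfd(\Ff),\deg(b)\}\le D$, monotonicity yields $b\in V_{\Ff,D}$. Thus $B\subseteq V_{\Ff,D}$, so $V_{\Ff,D}$ contains a Gr\"obner basis and $\sd(\Ff)\le D$.

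For the lower bound I would show $\sd(\Ff)\ge\mathrm{Gbd}_{\sigma}(\Ff)$ and $\sd(\Ff)\ge\lfd(\Ff)$. Put $s=\sd(\Ff)$ and fix a Gr\"obner basis $B\subseteq V_{\Ff,s}$. The leading terms $\{\mathrm{lt}_{\sigma}(b):b\in B\}$ generate $\mathrm{in}_{\sigma}((\Ff))$, whose minimal monomial generators are exactly the leading terms of the reduced basis and hence have maximal degree $\mathrm{Gbd}_{\sigma}(\Ff)$. The minimal generator $\mu$ of degree $\mathrm{Gbd}_{\sigma}(\Ff)$ must be divisible by some $\mathrm{lt}_{\sigma}(b)$, and minimality forces $\mathrm{lt}_{\sigma}(b)=\mu$, so $\deg(b)=\mathrm{Gbd}_{\sigma}(\Ff)$; as $b\in V_{\Ff,s}$ we get $\mathrm{Gbd}_{\sigma}(\Ff)\le s$. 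For the second inequality I would verify the defining property of $\lfd$ at $d=s$: for $f\in(\Ff)$, the reduction of $f$ modulo the Gr\"obner basis $B$ terminates at $0$ and expresses $f=\sum_k c_k m_k b_{i_k}$ with $c_k\in\K$, $m_k$ monomials, $b_{i_k}\in B$. Degree-compatibility makes every intermediate remainder have degree at most $\deg(f)$, whence $\deg(m_k b_{i_k})\le\deg(f)\le\max\{s,\deg(f)\}$ for all $k$; combining monotonicity ($b_{i_k}\in V_{\Ff,\max\{s,\deg(f)\}}$) with the closure property gives $m_k b_{i_k}\in V_{\Ff,\max\{s,\deg(f)\}}$ and therefore $f\in V_{\Ff,\max\{s,\deg(f)\}}$. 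This means $\lfd(\Ff)\le s$, so $\sd(\Ff)\ge\max\{\lfd(\Ff),\mathrm{Gbd}_{\sigma}(\Ff)\}=D$.

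The delicate point, which I expect to be the main obstacle, is the degree bookkeeping in the reduction step: one must check that degree-compatibility keeps each term $m_k b_{i_k}$ within the budget $\max\{s,\deg(f)\}$ so that the closure property of $V_{\Ff,\max\{s,\deg(f)\}}$ can be invoked at every stage. This is subtle because the closure defining $V_{\Ff,d}$ can create elements that do not lie in the naive span $\{m f_i:\deg(mf_i)\le d\}$, so both monotonicity and the membership statements have to be argued through the minimality and closure characterization of $V_{\Ff,d}$ rather than by writing down explicit representations. Once these structural facts are in place, the two inequalities follow from the bookkeeping above.
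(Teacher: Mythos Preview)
The paper does not prove this theorem: it is quoted from \cite{CG22} (Caminata--Gorla) and stated without argument, so there is no in-paper proof to compare your attempt against.

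On its own merits your argument is correct and is essentially the natural one. The monotonicity $V_{\Ff,d_1}\subseteq V_{\Ff,d_2}$ for $d_1\le d_2$ is proved as you say (the set $W=\{g\in V_{\Ff,d_2}:\deg g\le d_1\}$ satisfies the two defining properties), and together with the fact that $V_{\Ff,d}$ consists of polynomials of degree at most $d$ it gives everything you need. For $\sd(\Ff)\le D$ the reduced Gr\"obner basis lands in $V_{\Ff,D}$ exactly by the definition of $\lfd$ plus monotonicity. For $\sd(\Ff)\ge\mathrm{Gbd}_{\sigma}(\Ff)$ your minimal-generator argument is fine. For $\sd(\Ff)\ge\lfd(\Ff)$, the key observation---that under a degree-compatible order the division of $f$ by a Gr\"obner basis $B\subseteq V_{\Ff,s}$ never produces a term $m_k b_{i_k}$ of degree exceeding $\deg(f)$---is correct, because each subtracted multiple $m b$ has leading term equal to a term of the current remainder, hence degree at most the degree of that remainder, and the remainder degrees are nonincreasing. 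Then $b_{i_k}\in V_{\Ff,s}\subseteq V_{\Ff,\max\{s,\deg f\}}$ and closure gives $m_k b_{i_k}\in V_{\Ff,\max\{s,\deg f\}}$, so $f\in V_{\Ff,\max\{s,\deg f\}}$ and $\lfd(\Ff)\le s$. No gaps.
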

 For a more detailed discussion about the solving degree, the degree of regularity, and the last fall degree we refer the interested reader to \cite{CG,CG22}.

\section{Proof of Theorem \ref{theorem:boundsolvdeg}}
From now on, let $\Ff=\{f_1,\dots,f_k\}$ be a family of polynomials in $\mathbb{K}[x_1,\dots,x_n]$ such that $\dreg(\Ff)=d<+\infty$ and $\deg(f_i)\leq d$ for all $1\leq i\leq k$.
\begin{lemma}\label{lemma:degreg}
 For each monomial $m$ of degree $d$ there exists $p\in\Vd$ such that $p^{\mathrm{top}}=m$.
\end{lemma}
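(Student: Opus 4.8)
The plan is to use the defining property of the degree of regularity to write the monomial $m$ as a polynomial combination of the top parts $f_i^{\mathrm{top}}$, and then lift that combination verbatim to a combination of the $f_i$ themselves, keeping track of degrees so that the lifted polynomial lands in $\Vd$.

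First I would invoke the hypothesis $\dreg(\Ff)=d$, which means $(\Ff^{\mathrm{top}})_d=(\K[x_1,\dots,x_n])_d$. Since $m$ is a monomial of degree $d$, it belongs to this space, so there are polynomials $g_1,\dots,g_k$ with $m=\sum_{i=1}^k g_i f_i^{\mathrm{top}}$ (discarding any $i$ with $f_i=0$). Comparing homogeneous components of degree $d$ on both sides and using that $f_i^{\mathrm{top}}$ is homogeneous of degree $\deg(f_i)$, I may replace each $g_i$ by its homogeneous component of degree $d-\deg(f_i)$; this exponent is $\geq 0$ exactly because of the standing assumption $\deg(f_i)\leq d$.

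Next I would set $p:=\sum_{i=1}^k g_i f_i\in(\Ff)$ and verify $p^{\mathrm{top}}=m$. Each product $g_i f_i$ has degree $\deg(g_i)+\deg(f_i)=d$, and its homogeneous part of degree $d$ is $g_i f_i^{\mathrm{top}}$; hence the degree-$d$ component of $p$ is $\sum_i g_i f_i^{\mathrm{top}}=m\neq 0$, while every other homogeneous component of $p$ has degree strictly less than $d$. Finally I would check $p\in\Vd$: each nonzero $f_i$ satisfies $\deg(f_i)\leq d$, so $f_i\in\Vd$ by the first clause of the definition of $\Vd$; writing the homogeneous polynomial $g_i$ as a $\K$-linear combination of monomials $m'$, each of degree $d-\deg(f_i)$, we get $\deg(m'f_i)=d$, so $m'f_i\in\Vd$ by the multiplication-closure clause; by $\K$-linearity $g_i f_i\in\Vd$, and summing over $i$ gives $p\in\Vd$.

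I do not expect a genuine obstacle here; the only point requiring care is the degree bookkeeping. Specifically, one must use $\deg(f_i)\leq d$ to ensure the cofactors $g_i$ have nonnegative degree, and one must confirm that every intermediate product $m'f_i$ has degree exactly $d$ (hence $\leq d$), which is precisely the condition under which the closure axiom defining $\Vd$ can be applied.
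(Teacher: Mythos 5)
Your proof is correct and follows essentially the same route as the paper: express $m\in(\Ff^{\mathrm{top}})_d$ as a homogeneous combination of the $f_i^{\mathrm{top}}$ with cofactors of degree $d-\deg(f_i)$ (nonnegative by the standing hypothesis), then lift to $p=\sum g_i f_i\in\Vd$ and read off $p^{\mathrm{top}}=m$. Your write-up is in fact slightly more careful than the paper's, which phrases the cofactors as single monomials; nothing further is needed.
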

\begin{proof}
Since $d$ is the degree of regularity of $\Ff$, there exist $m_1,\dots,m_{k}$ monomials such that $\sum_{i=1}^km_if_i^{\mathrm{top}}=m$, $m_i\neq0$, and $\deg(m_if_i)\leq d$ for all $1\leq i\leq k$. Let $p=\sum_{i=1}^km_if_i$. It is clear that $\deg(p)=\deg(m)=d$, $p\in \Vd$, and $p^{\mathrm{top}}=m$.
\end{proof}
Let $D=\binom{d+n-1}{d}$ and let $\mathcal{P}_{\Ff}=\{p_1,\dots,p_D\}\subseteq \Vd$ be a set such that for each monic monomial $m$ of degree $d$ there exists $1\leq i\leq D$ such that $p_i^{\mathrm{top}}=m$. The existence of $\mathcal{P}_{\Ff}$ follows from Lemma \ref{lemma:degreg}. Notice that $\mathcal{P}_{\Ff}\subseteq \Vdd$, since $\Vd\subseteq\Vdd$.
\begin{lemma}\label{lemma:writingpol}
For every $f\in\Vdd$ of degree $d$, there exist $a_1,\dots,a_D\in\mathbb{K}$ and $\bar f\in\Vdd$ with $\deg(\bar f)<d$, such that $f=\bar f+\sum_{i=1}^D a_ip_i$, where $p_i\in \mathcal{P}_{\Ff}$.
\end{lemma}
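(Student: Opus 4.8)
The plan is to use the set $\mathcal{P}_{\Ff}$ to cancel the top-degree part of $f$. First observe that $D=\binom{d+n-1}{d}$ is exactly the number of monic monomials of degree $d$ in $\mathbb{K}[x_1,\dots,x_n]$, and by construction every such monomial occurs as $p_i^{\mathrm{top}}$ for some $p_i\in\mathcal{P}_{\Ff}$; hence the map $p_i\mapsto p_i^{\mathrm{top}}$ is a bijection from $\mathcal{P}_{\Ff}$ onto the set of monic degree-$d$ monomials. After relabelling, write $m_i=p_i^{\mathrm{top}}$, so that $\{m_1,\dots,m_D\}$ is a $\mathbb{K}$-basis of the homogeneous component $(\mathbb{K}[x_1,\dots,x_n])_d$. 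In particular $\deg(p_i)=d$ for every $i$, so the degree-$d$ homogeneous component of $p_i$ is precisely $p_i^{\mathrm{top}}=m_i$.

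Now let $f\in\Vdd$ with $\deg(f)=d$. Then $f^{\mathrm{top}}$ is a nonzero homogeneous polynomial of degree $d$, so $f^{\mathrm{top}}=\sum_{i=1}^D a_i m_i$ for suitable $a_1,\dots,a_D\in\mathbb{K}$. Put $g=\sum_{i=1}^D a_i p_i$. Since $\Vdd$ is a $\mathbb{K}$-linear space containing each $p_i$, we get $g\in\Vdd$, and therefore $\bar f:=f-g\in\Vdd$ as well. Finally, the degree-$d$ homogeneous component of $g$ equals $\sum_{i=1}^D a_i m_i=f^{\mathrm{top}}$, which is exactly the degree-$d$ component of $f$; these cancel in $f-g$, and as $f,g$ both have degree $d$ no higher-degree term is present, so $\deg(\bar f)\le d-1$. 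Thus $f=\bar f+\sum_{i=1}^D a_i p_i$ is the desired decomposition.

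There is essentially no obstacle in this argument: it is a direct consequence of Lemma~\ref{lemma:degreg} (which guarantees the existence of $\mathcal{P}_{\Ff}$ and hence that the $p_i^{\mathrm{top}}$ span $(\mathbb{K}[x_1,\dots,x_n])_d$) together with the fact that $\Vdd$ is closed under $\mathbb{K}$-linear combinations. The only two points deserving a moment's care are the counting remark that makes $p_i\mapsto p_i^{\mathrm{top}}$ a bijection onto the monic degree-$d$ monomials, and the verification that subtracting $g$ annihilates the whole top part of $f$, which is immediate once one records $\deg(p_i)=d$ for all $i$.
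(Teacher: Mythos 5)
Your proof is correct and follows essentially the same route as the paper: both arguments use that the $p_i^{\mathrm{top}}$ span $(\mathbb{K}[x_1,\dots,x_n])_d$ to cancel the top-degree part of $f$, and that $\Vdd$ is a $\mathbb{K}$-linear space to conclude $\bar f\in\Vdd$. Your version merely spells out the bijection $p_i\mapsto p_i^{\mathrm{top}}$ and the cancellation of homogeneous components in more detail than the paper does.
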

\begin{proof}
    Since $\langle p_1^{\mathrm{top}},\dots,p_D^{\mathrm{top}}\rangle_{\mathbb{K}}=\mathbb{K}[x_1,\dots,x_n]_d$, there exist $a_1,\dots,a_d\in\mathbb{K}$ such that $f-\sum_{i=1}^D a_ip_i=\bar f$ has degree strictly smaller than $d$. As $f,p_1,\dots,p_D\in\Vdd$, we conclude that $\bar f\in\Vdd$ too.
\end{proof}
We denote by $\mathscr{P}_{\mathrm{fin}}(\mathbb{K}[x_1,\dots,x_n]^3)$ the set of all finite subsets of $\mathbb{K}[x_1,\dots,x_n]^3$. We now define a subset $A_d$ of $\mathscr{P}_{\mathrm{fin}}(\mathbb{K}[x_1,\dots,x_n]^3)$ giving a concrete description of its elements. A set $\{(m_1,a_1,g_1),\dots, (m_{\ell},a_{\ell},g_{\ell})\}\in A_d$ if and only if the followings are satisfied:
\begin{itemize}
    \item there is exactly one element such that $m_i=1$. For simplicity, we assume $m_1=1$. Then, $a_1=1$ and $g_1\in \Vdd$,
    \item $m_2,\dots, m_{\ell}$ are monic monomials of degree at least $2$,
    \item $a_2,\dots,a_{\ell}\in\mathbb{K}\setminus0$ and $g_2,\dots,g_{\ell}\in\mathcal{P}_{\Ff}$,
    \item if $m_i=m_j$ then $g_i\neq g_j$.
\end{itemize}
For $X,Y\in A_d$, we define $X\oplus Y\in A_d$ as follows. If $(1,1,f_x)\in X$ and $(1,1,f_y)\in Y$, then $(1,1,f_x+f_y)\in X\oplus Y$. Moreover, for $m$ of degree at least $2$, we have that $(m,a,p)\in X\oplus Y$ if and only if and only if one of the following occurs:
\begin{itemize}
    \item $(m,a,p)\in X$, and for all $b\in \mathbb{K}\setminus 0$ we have that$(m,b,p)\notin Y$,
    \item $(m,a,p)\in Y$, and for all $b\in \mathbb{K}\setminus 0$ we have that $(m,b,p)\notin X$,
    \item there exist $(m,b,p)\in X$ and $(m,c,p)\in Y$ such that $a=b+c\neq0$.
\end{itemize}
\begin{lemma}\label{lemma:group}
    We have that $(A_d,\oplus)$ is an abelian group whose identity element is $\{(1,1,0)\}$.
\end{lemma}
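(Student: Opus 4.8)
\emph{Proof proposal.} The plan is to realise $(A_d,\oplus)$ as isomorphic to a manifestly abelian group, so that all the axioms follow at once; the only point that needs genuine attention is that $\oplus$ is well defined and maps $A_d\times A_d$ into $A_d$.

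Let $S$ be the set of pairs $(m,p)$ with $m$ a monic monomial of $\mathbb{K}[x_1,\dots,x_n]$ of degree at least $2$ and $p\in\mathcal{P}_{\Ff}$, and let $W=\bigoplus_{s\in S}\mathbb{K}$ be the $\mathbb{K}$-vector space of finitely supported functions $S\to\mathbb{K}$, with pointwise addition. I would first define $\Phi\colon A_d\to \Vdd\times W$ by sending $X=\{(1,1,g_1),(m_2,a_2,g_2),\dots,(m_\ell,a_\ell,g_\ell)\}$ to $(g_1,\varphi_X)$, where $\varphi_X(m,p)=a_i$ if $(m_i,g_i)=(m,p)$ for some $i\geq 2$ and $\varphi_X(m,p)=0$ otherwise. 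The last defining clause of $A_d$ --- if $m_i=m_j$ then $g_i\neq g_j$ --- guarantees that at most one triple of $X$ carries a given pair $(m,p)$, so $\varphi_X$ is well defined, and it is finitely supported because $X$ is finite; moreover $g_1\in\Vdd$ by definition of $A_d$. Conversely, a pair $(g,\varphi)$ with $g\in\Vdd$ and $\varphi\in W$ yields the element $\{(1,1,g)\}\cup\{(m,\varphi(m,p),p):\varphi(m,p)\neq 0\}$ of $A_d$, and this inverse construction shows that $\Phi$ is a bijection.

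Next I would verify, for $X,Y\in A_d$, that the three bullet cases in the definition of $\oplus$ say exactly that the coefficient attached to a pair $(m,p)$ with $\deg m\geq 2$ in $X\oplus Y$ equals $\varphi_X(m,p)+\varphi_Y(m,p)$ when this is nonzero, and that no triple with that pair appears when the sum vanishes; together with the prescription $(1,1,f_x)\oplus(1,1,f_y)\mapsto(1,1,f_x+f_y)$ on the degree-$1$ part this means precisely that $X\oplus Y\in A_d$ and $\Phi(X\oplus Y)=\Phi(X)+\Phi(Y)$. Closure is now transparent: the first coordinate $f_x+f_y$ lies in $\Vdd$ because $\Vdd$ is a $\mathbb{K}$-linear space, every retained coefficient is nonzero by construction, $m=1$ occurs exactly once, and for each $(m,p)$ at most one triple survives, so the repeated-monomial clause holds. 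Since $\Phi$ is a bijection intertwining $\oplus$ and the addition of the group $\Vdd\times W$, associativity, commutativity, and the existence of inverses transfer from $\Vdd\times W$ to $A_d$; and $\Phi^{-1}(0,0)=\{(1,1,0)\}$, so $\{(1,1,0)\}$ is the identity, as claimed.

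The main (indeed only) obstacle is the closure/well-definedness check for $\oplus$: that cancellation of equal-and-opposite coefficients, the restriction to monomials of degree at least $2$, and the requirement of distinct $g$'s for equal monomials are all preserved. Routing the verification through $\Phi$ reduces these to the obvious facts that the sum of two finitely supported functions is finitely supported and that $0$ is the additive identity of a vector space, so once $\Phi$ and its compatibility with $\oplus$ are in place nothing else has to be done by hand.
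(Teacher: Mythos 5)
Your proof is correct, but it takes a genuinely different route from the paper. The paper verifies the group axioms directly: associativity and commutativity are inherited from addition in $\mathbb{K}$, the identity is checked by inspection, and the inverse of $X=\{(1,1,f_x),(m_2,a_2,g_2),\dots,(m_\ell,a_\ell,g_\ell)\}$ is exhibited explicitly as $\{(1,1,-f_x),(m_2,-a_2,g_2),\dots,(m_\ell,-a_\ell,g_\ell)\}$; the closure of $A_d$ under $\oplus$ is left implicit. You instead build an explicit bijection $\Phi\colon A_d\to \Vdd\times W$ with $W=\bigoplus_{(m,p)}\mathbb{K}$ and check that the three bullet cases defining $\oplus$ say exactly that coefficients add pointwise, with vanishing sums dropped; this is a correct reading of the definition (the clause ``$m_i=m_j\Rightarrow g_i\neq g_j$'' is precisely what makes $\varphi_X$ well defined, as you note). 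Your approach costs a little more setup but buys two things: the well-definedness and closure of $\oplus$, which is the only nontrivial content of the lemma, is handled cleanly rather than glossed over; and since $\star$ visibly corresponds to scalar multiplication on $\Vdd\times W$ under the same $\Phi$, the paper's subsequent unproved remark that $(A_d,\oplus,\star)$ is a $\mathbb{K}$-vector space, as well as Lemma 3.4 on degrees, come essentially for free. Both arguments are sound; yours is the more structural and arguably the more complete one.
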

\begin{proof}
    The fact that $\oplus$ is associative and commutative comes from the fact that $+$ is associative and commutative over $\mathbb{K}$. Moreover, it is clear from the definition that $\{(1,1,0)\}\oplus X=X$ for all $X\in A_d$. Finally, let $X=\{(1,1,f_x),(m_2,a_2,g_2),\dots,(m_{\ell},a_{\ell},g_{\ell})\}\in A_d$, then the set $\{(1,1,-f_x),(m_2,-a_2,g_2),\dots,(m_{\ell},-a_{\ell},g_{\ell})\}$ is also an element of $A_d$. We denote this set by $-X$. It is easy to verify that $X\oplus -X=\{(1,1,0)\}$.
\end{proof}
For $X=\{(1,1,f_x),(m_2,a_2,g_2),\dots (m_{\ell},a_{\ell},g_{\ell})\}\in A_d$ and $a\in \mathbb{K}$, we define $a\star X$ as
\begin{equation*}
    a\star X=\begin{cases}
    \{(1,1,af_x),(m_2,aa_2,g_2),\dots (m_{\ell},aa_{\ell},g_{\ell})\}&a\neq 0\\
    \{(1,1,0)\}&a=0
\end{cases}
\end{equation*}
Clearly, we have that $a\star X\in A_d$. One can also verify that $(A_d,\oplus,\star)$ is a vector space over $\mathbb{K}$.

For $X\in A_d$ we define the degree of $X$ as $\deg(X)=\max\deg(m_ig_i)$. Directly from the definition of $\oplus$ and of $\star$, we obtain the following lemma.
\begin{lemma}\label{lemma:sumdegree}
    Let $X,Y\in A_d$ and $0\neq a\in \mathbb{K}$, then $\deg(X\oplus Y)\leq \max\{\deg(X),\deg(Y)\}$ and $\deg(a\star X)=\deg(X)$.
\end{lemma}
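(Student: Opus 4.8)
The plan is to read both assertions straight off the explicit descriptions of $\deg$, $\oplus$, and $\star$: no structural property of $A_d$ beyond these definitions is needed, so the argument is pure bookkeeping. The one point to keep in mind throughout is that, for a triple $(m,a,g)$, the quantity $\deg(mg)$ that contributes to $\deg(X)$ does not depend on the scalar $a$, and that replacing $g$ by a nonzero scalar multiple of itself leaves $\deg(mg)$ unchanged.

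For $\deg(a\star X)=\deg(X)$ with $a\neq0$: writing $X=\{(1,1,f_x),(m_2,a_2,g_2),\dots,(m_\ell,a_\ell,g_\ell)\}$, the set $a\star X$ consists of exactly the triples $(1,1,af_x)$ and $(m_i,aa_i,g_i)$ for $2\le i\le\ell$. Since $a\neq0$ we have $\deg(af_x)=\deg(f_x)$, so the collection of values $\{\deg(m_ig_i)\}$ is literally the same for $X$ and for $a\star X$, and taking the maximum gives the claimed equality. The hypothesis $a\neq0$ is essential here, since $\deg(0\star X)=\deg(\{(1,1,0)\})$ carries no information about $\deg(X)$.

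For $\deg(X\oplus Y)\le\max\{\deg(X),\deg(Y)\}$: I would bound $\deg(mp)$ for each triple $(m,a,p)\in X\oplus Y$ individually. If $m=1$, then by definition $p=f_x+f_y$ with $(1,1,f_x)\in X$ and $(1,1,f_y)\in Y$, hence $\deg(1\cdot p)=\deg(f_x+f_y)\le\max\{\deg f_x,\deg f_y\}\le\max\{\deg X,\deg Y\}$. If $\deg m\ge2$, the definition of $\oplus$ puts $(m,a,p)$ in exactly one of the three listed cases, and in each of them the pair $(m,p)$ already appears (with some scalar) in $X$ or in $Y$; therefore $\deg(mp)\le\deg X$ or $\deg(mp)\le\deg Y$, so $\deg(mp)\le\max\{\deg X,\deg Y\}$ in all cases. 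Taking the maximum over all triples of $X\oplus Y$ completes the argument.

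I do not anticipate any genuine obstacle: the statement is a routine consequence of the definitions, the only thing worth spelling out being why the additive cancellations permitted by $\oplus$ (the third case, with $a=b+c$) cannot produce a pair $(m,p)$ of larger degree than one already present in $X$ or in $Y$ — which is immediate, since that case still requires $(m,b,p)\in X$.
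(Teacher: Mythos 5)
Your proof is correct and is exactly the routine verification the paper has in mind (the paper omits the proof, stating the lemma follows directly from the definitions of $\oplus$ and $\star$). The case analysis on the triples of $X\oplus Y$ and the observation that scalars do not affect $\deg(m_ig_i)$ are precisely the right bookkeeping.
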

For $f\in\mathbb{K}[x_1,\dots,x_n]$, let $A_{d,f}$ be the subset of $A_d$ given by
\begin{equation*}
    A_{d,f}=\left\{\{(m_1,a_1,g_1),\dots, (m_{\ell},a_{\ell},g_{\ell})\}\in A_d: \sum_{i=1}^{\ell} m_ia_ig_i=f\right\}.
\end{equation*}
Notice that the sets $A_{d,f}$ with $f\in\Ff$ form a partition of $A_d=\bigsqcup_f A_{d,f}$. The following lemma shows how the operations $\oplus$ and $\star$ act on this partition. We omit the proof since it is straightforward.
\begin{lemma}\label{lemma:sumofsets}
    Let $X\in A_{d,f_x}$, $Y\in A_{d,f_y}$, and $0\neq a\in\mathbb{K}$. Then $X\oplus Y\in A_{d,f_{x}+f_{y}}$ and $a\star X\in A_{d,af_x}$.
\end{lemma}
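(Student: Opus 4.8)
The plan is to recognize the two assertions as saying that a single ``evaluation'' map is $\mathbb{K}$-linear. Define $\operatorname{val}\colon A_d\to\mathbb{K}[x_1,\dots,x_n]$ by $\operatorname{val}(\{(m_1,a_1,g_1),\dots,(m_\ell,a_\ell,g_\ell)\})=\sum_{i=1}^\ell m_ia_ig_i$. With this notation $A_{d,f}=\operatorname{val}^{-1}(f)$, so the lemma is exactly the claim that $\operatorname{val}(X\oplus Y)=\operatorname{val}(X)+\operatorname{val}(Y)$ and $\operatorname{val}(a\star X)=a\,\operatorname{val}(X)$ for $0\neq a\in\mathbb{K}$; closure of $A_d$ under $\oplus$ and $\star$ is already available from Lemma \ref{lemma:group} and from the remark following the definition of $\star$, so nothing more is needed there.

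The identity for $\star$ is immediate: by definition $a\star X$ is obtained from $X=\{(1,1,f_X),(m_2,a_2,g_2),\dots,(m_\ell,a_\ell,g_\ell)\}$ by replacing $(1,1,f_X)$ with $(1,1,af_X)$ and each $(m_i,a_i,g_i)$ with $(m_i,aa_i,g_i)$, whence $\operatorname{val}(a\star X)=af_X+\sum_{i\geq 2}m_i(aa_i)g_i=a\,\operatorname{val}(X)$.

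For $\oplus$, the one point that deserves care is that two triples sharing both their monomial and their polynomial may cancel. First I would invoke the last defining condition of $A_d$ (if $m_i=m_j$ then $g_i\neq g_j$): it guarantees that the triples of $X$ with first coordinate $\neq 1$ are indexed by a finite set of pairs $(m,p)$ with $\deg m\geq 2$ and $p\in\mathcal{P}_{\Ff}$, each such pair carrying a single well-defined coefficient. I record that coefficient as $c^X_{m,p}\in\mathbb{K}\setminus\{0\}$ when $(m,c^X_{m,p},p)\in X$ and as $c^X_{m,p}=0$ otherwise, and I do the same for $Y$. Then $\operatorname{val}(X)=f_X+\sum_{(m,p)}c^X_{m,p}\,m\,p$, and likewise for $Y$. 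Running through the three bulleted cases in the definition of $\oplus$, together with the remaining situation $(m,b,p)\in X$ and $(m,c,p)\in Y$ with $b+c=0$ (in which no triple for that $(m,p)$ survives in $X\oplus Y$), one checks case by case that the coefficient of $(m,p)$ in $X\oplus Y$ equals $c^X_{m,p}+c^Y_{m,p}$ --- the vanishing case being consistent precisely because there $c^X_{m,p}+c^Y_{m,p}=0$. Since the triple of $X\oplus Y$ with first coordinate $1$ is $(1,1,f_X+f_Y)$ by definition, summing all contributions yields $\operatorname{val}(X\oplus Y)=(f_X+f_Y)+\sum_{(m,p)}(c^X_{m,p}+c^Y_{m,p})\,m\,p=\operatorname{val}(X)+\operatorname{val}(Y)$, as claimed.

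I do not expect any genuine obstacle here: the only place where a slip is possible is forgetting the cancellation case in the coefficient bookkeeping for $\oplus$, but that case is automatically consistent, so the whole argument reduces to the short verification above.
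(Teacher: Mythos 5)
Your proof is correct and is exactly the routine verification the paper has in mind: the paper omits the proof of this lemma as ``straightforward,'' and your linearity-of-evaluation argument, including the careful handling of the cancellation case $b+c=0$ in the definition of $\oplus$, is the intended check.
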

Our goal now is to prove that for every $f\in(\Ff)$, the corresponding $A_{d,f}$ is not empty. To achieve this result, we need one last lemma.
\begin{lemma}\label{lemma:multiple}
Let $m$ be a monic monomial. If $f\in \Vdd$ and $\deg(f)\leq d$, then there exists $X\in A_{d,mf}$ such that $\deg(X)=\deg(mf)$.
\end{lemma}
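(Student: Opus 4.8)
The plan is to argue by induction on $N := \deg(mf)$; concretely, I would prove that for every $N$ the conclusion holds for all monic monomials $m$ and all $f\in\Vdd$ with $\deg(f)\le d$ and $\deg(mf)\le N$. The base case $N\le d+1$ is immediate: when $\deg(mf)\le d+1$ the closure property in the definition of $\Vdd$ forces $mf\in\Vdd$, so $X=\{(1,1,mf)\}$ is an element of $A_{d,mf}$ of degree $\deg(mf)$.

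For the inductive step I would first dispose of the case $\deg(f)=d$, which is the heart of the matter; then $N=\deg(m)+d\ge d+2$, so $\deg(m)\ge 2$. Here I would invoke Lemma \ref{lemma:writingpol} to write $f=\bar f+\sum_i a_ip_i$ with $\bar f\in\Vdd$ of degree $<d$, with the $p_i\in\mathcal{P}_{\Ff}$ pairwise distinct and $a_i\in\mathbb{K}\setminus 0$ (discarding vanishing coefficients); at least one term survives since $\deg(\bar f)<d=\deg(f)$. Multiplying by $m$ gives $mf=m\bar f+\sum_i a_i m p_i$. The term $m\bar f$ is handled recursively: $\bar f\in\Vdd$ has degree $<d$ and $\deg(m\bar f)<\deg(m)+d=N$, so the inductive hypothesis supplies $X_0\in A_{d,m\bar f}$ with $\deg(X_0)=\deg(m\bar f)\le N-1$. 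For the remaining part I would set $Y:=\{(1,1,0)\}\cup\{(m,a_i,p_i)\}_i$, which is a legitimate element of $A_d$ because $\deg(m)\ge 2$, the $a_i$ are nonzero and the $p_i\in\mathcal{P}_{\Ff}$ are distinct; by construction $Y\in A_{d,\sum_i a_i m p_i}$ and $\deg(Y)=\deg(m)+d=N$ since each $p_i^{\mathrm{top}}$ has degree $d$. Then $X:=X_0\oplus Y$ belongs to $A_{d,mf}$ by Lemma \ref{lemma:sumofsets}, using $mf=m\bar f+\sum_i a_imp_i$.

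The hard part will be checking that $\deg(X)$ is exactly $N$, not merely $\le N$ (the latter being Lemma \ref{lemma:sumdegree}). The key observation is that every triple $(\mu,c,p)$ of $X_0$ with $\mu\neq 1$ satisfies $\deg(\mu)+d=\deg(\mu p)\le\deg(X_0)\le N-1$, whence $\deg(\mu)<\deg(m)$ and in particular $\mu\neq m$, while the triple of $X_0$ with first coordinate $1$ contributes degree at most $d+1<N$. Consequently, when forming $X_0\oplus Y$ none of the top-degree triples $(m,a_i,p_i)$ of $Y$ is cancelled or altered, so all of them survive in $X$ and $\deg(X)\ge\deg(mp_i)=N$. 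This finishes the case $\deg(f)=d$.

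Finally I would reduce the general case $\deg(f)=e<d$ to the previous one by padding: since $\deg(m)=N-e>d-e$, write $m=m'm''$ with $m''$ monic of degree $d-e$ and $m'$ monic of degree $N-d\ge 2$. Then $m''f\in\Vdd$ (its degree is $d\le d+1$) and $\deg(m''f)=d$, so applying the case just established to the pair $(m',m''f)$ produces $X\in A_{d,m'(m''f)}=A_{d,mf}$ with $\deg(X)=N$; this is not circular because that case only appealed to the inductive hypothesis for products of degree strictly below $N$. I expect the only delicate point in the whole argument to be the degree bookkeeping above: one must ensure that the recursively produced summand $X_0$ not only has degree below $N$ but also shares no monomial factor $m$ with the top-degree part of $Y$, so that no cancellation can drop $\deg(X)$ below $N$.
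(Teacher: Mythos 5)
Your proof is correct and follows essentially the same route as the paper: reduce to the case $\deg(f)=d$, split $f=\bar f+\sum a_ip_i$ via Lemma \ref{lemma:writingpol}, handle $m\bar f$ by induction and $\sum a_imp_i$ by the explicit set $\{(1,1,0)\}\cup\{(m,a_i,p_i)\}$, then combine with $\oplus$. Your no-cancellation check that $\deg(X)$ is exactly $\deg(mf)$ (every non-identity monomial of $X_0$ has degree strictly less than $\deg(m)$, so the triples $(m,a_i,p_i)$ survive) is actually more careful than the paper, which asserts the equality $\deg(\bar X\oplus X')=\deg(mf)$ without justification.
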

\begin{proof}
    We proceed by induction on $\deg(mf)$. If $\deg(mf)\leq d+1$, since $f\in\Vdd$ we have $mf\in \Vdd$ and therefore $X=\{(1,1,mf)\}\in A_{d,mf}$. So, suppose $\deg(mf)>d+1$. If $\deg(f)<d$, there exist $m_1,m_2$ monic monomials such that $m=m_1m_2$ and $\deg(m_1f)=d$. Since $m_1f\in\Vdd$, without loss of generality, we can assume $\deg(f)=d$.
    
    By Lemma \ref{lemma:writingpol}, we write $f=\bar f+\sum_{i=1}^D a_ip_i$ with $\deg(\bar f)<d$. By inductive hypothesis we have that there exists $\bar X\in A_{d,m\bar f}$ such that $\deg(\bar X)\leq\deg(m\bar f)$. Let $f'=\sum_{i=1}^D a_ip_i$. It is clear that $X'=\{(1,1,0)\}\cup\{(m,a_i,p_i):a_i\neq 0\}\in A_{d,mf'}$ and that $\deg(X')= \deg(mf')$. Since $mf=m\bar f+mf'$, by Lemma \ref{lemma:sumofsets} we conclude that $\bar X\oplus X'\in A_{d,mf}$ and that $\deg(X\oplus X')=\max\{\deg(m\bar f),\deg(mf')\}= \deg(mf)$.
\end{proof}
\begin{proposition}\label{proposition:notemptyfamily}
Let $f\in (\Ff)$, then $A_{d,f}\neq\emptyset$.
\end{proposition}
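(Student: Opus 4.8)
The plan is to reduce the general statement to Lemma \ref{lemma:multiple} using the fact that $(A_d,\oplus,\star)$ is a $\mathbb{K}$-vector space. First I would write $f=\sum_{j=1}^{k}h_jf_j$ with $h_j\in\mathbb{K}[x_1,\dots,x_n]$, and expand each $h_j$ as a finite $\mathbb{K}$-linear combination $h_j=\sum_i c_{j,i}m_{j,i}$ of monic monomials with $c_{j,i}\neq 0$. This presents $f$ as a finite sum $\sum_{j,i}c_{j,i}\,m_{j,i}f_j$, which is exactly the shape to which the previous lemmas apply.

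Next, for each pair $(j,i)$ I would apply Lemma \ref{lemma:multiple} to the monic monomial $m_{j,i}$ and the generator $f_j$: this is legitimate because $f_j\in\Vdd$ (since $\deg f_j\leq d\leq d+1$) and $\deg f_j\leq d$, so it produces an element $X_{j,i}\in A_{d,m_{j,i}f_j}$. Scaling by $c_{j,i}$ via $\star$ and invoking Lemma \ref{lemma:sumofsets} gives $c_{j,i}\star X_{j,i}\in A_{d,c_{j,i}m_{j,i}f_j}$. Since $(A_d,\oplus)$ is an abelian group by Lemma \ref{lemma:group}, the finite sum $X=\bigoplus_{j,i}\bigl(c_{j,i}\star X_{j,i}\bigr)$ is a well-defined element of $A_d$, and by iterating the first part of Lemma \ref{lemma:sumofsets} it lies in $A_{d,\sum_{j,i}c_{j,i}m_{j,i}f_j}=A_{d,f}$. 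This shows $A_{d,f}\neq\emptyset$. In the degenerate case $f=0$, corresponding to the empty combination, the identity $\{(1,1,0)\}$ is already an element of $A_{d,0}$, so the conclusion holds in all cases.

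I do not expect a genuine obstacle here: essentially all the weight of the argument already sits in Lemma \ref{lemma:multiple} (and, through Lemmas \ref{lemma:degreg} and \ref{lemma:writingpol}, in the definition of $\dreg$). The only points requiring a little care are that $\oplus$ respects the partition $A_d=\bigsqcup_f A_{d,f}$ — which is precisely the content of Lemma \ref{lemma:sumofsets} — and that monomials occurring with zero coefficient contribute the identity $\{(1,1,0)\}$ and may simply be discarded from the sum.
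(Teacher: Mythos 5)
Your proof is correct and follows essentially the same route as the paper: decompose $f=\sum q_jf_j$ into a finite sum of terms of the form (scalar)$\cdot$(monomial)$\cdot f_j$, apply Lemma \ref{lemma:multiple} to each term, and combine via $\oplus$ using Lemma \ref{lemma:sumofsets}. The only cosmetic difference is that the paper absorbs the scalar into the polynomial before invoking Lemma \ref{lemma:multiple}, while you apply the lemma to $f_j$ and then rescale with $\star$; both are valid.
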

\begin{proof}
    Since $f\in(\Ff)$, there are $q_1,\dots,q_k\in \mathbb{K}[x_1,\dots,x_n]$ such that $f=\sum q_if_i$. Each $q_if_i$ can be written as $\sum m_{i,j}a_{i,j}f_i$ with $m_{i,j}$ monic monomial. Since for each $i,j$, $\deg(a_{i,j}f_i)\leq d$, by Lemma \ref{lemma:multiple} there exists $X_{i,j}\in A_{d,a_{i,j}m_{i,j}f_i}$. By Lemma \ref{lemma:sumofsets} we conclude that $\sum_{i,j}X_{i,j}\in A_{d,f}$. 
\end{proof}
The next theorem is central in the proof of Theorem \ref{theorem:boundsolvdeg}.
\begin{theorem}\label{theorem:finVdd}
If $f\in (\Ff)$ and $\deg(f)\leq d+1$, then $f\in\Vdd$.
\end{theorem}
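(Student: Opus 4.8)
The plan is to leverage Proposition \ref{proposition:notemptyfamily}: given $f\in(\Ff)$ with $\deg(f)\le d+1$ we already know $A_{d,f}\ne\emptyset$, and it suffices to produce some $X\in A_{d,f}$ of degree $\le d+1$. Indeed, if $X=\{(1,1,g_1),(m_2,a_2,g_2),\dots,(m_\ell,a_\ell,g_\ell)\}$ has $\deg(X)\le d+1$, then any term with $i\ge 2$ would force $\deg(m_ig_i)\ge 2+d$, which is impossible; hence $X=\{(1,1,g_1)\}$ with $g_1\in\Vdd$ and $f=g_1\in\Vdd$. To build such an $X$, I would pick $X\in A_{d,f}$ of minimal degree $e$ and, among those, one with the fewest terms $(m_i,a_i,g_i)$ satisfying $\deg(m_ig_i)=e$; write $S$ for the set of such indices. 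If $e\ge d+2$ I derive a contradiction. Since $\deg(f)<e$ and $\deg(g_1)\le d+1<e$, the degree-$e$ homogeneous part of $f=\sum m_ia_ig_i$ vanishes, i.e.\ $\sum_{i\in S}a_im_ig_i^{\mathrm{top}}=0$, a linear relation among monic monomials of degree $e$. As all $a_i$ are nonzero, some monomial is reached by two distinct indices, so there are $i_0\ne i_1$ in $S$ with $m_{i_0}g_{i_0}^{\mathrm{top}}=m_{i_1}g_{i_1}^{\mathrm{top}}=:\mu$; moreover $g_{i_0}^{\mathrm{top}}\ne g_{i_1}^{\mathrm{top}}$, since equality would give $g_{i_0}=g_{i_1}$ and then $m_{i_0}=m_{i_1}$, violating the defining property of $A_d$.

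The heart of the argument is a syzygy-style reduction of the two terms at $i_0,i_1$. Put $\lambda=\mathrm{lcm}(g_{i_0}^{\mathrm{top}},g_{i_1}^{\mathrm{top}})$, $u=\lambda/g_{i_0}^{\mathrm{top}}$, $u'=\lambda/g_{i_1}^{\mathrm{top}}$; since $g_{i_0}^{\mathrm{top}},g_{i_1}^{\mathrm{top}}$ both divide $\mu$ we may write $\mu=\rho\lambda$, whence $m_{i_0}=\rho u$ and $m_{i_1}=\rho u'$. Using $a_{i_0}m_{i_0}g_{i_0}+a_{i_1}m_{i_1}g_{i_1}=(a_{i_0}+a_{i_1})m_{i_0}g_{i_0}+a_{i_1}\rho(u'g_{i_1}-ug_{i_0})$ I set $X'=(X\ominus Z)\oplus Z'\oplus\big((-a_{i_1})\star Y\big)$, where $Z=\{(1,1,0),(m_{i_0},a_{i_0},g_{i_0}),(m_{i_1},a_{i_1},g_{i_1})\}$, $Z'=\{(1,1,0)\}$ together with $(m_{i_0},a_{i_0}+a_{i_1},g_{i_0})$ when this coefficient is nonzero, and $Y\in A_{d,\rho(ug_{i_0}-u'g_{i_1})}$ is furnished by the reduction lemma below. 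By Lemmas \ref{lemma:sumofsets} and \ref{lemma:sumdegree}, $X'\in A_{d,f}$ and $\deg(X')\le e$; if in addition $\deg(Y)\le e-1$, then $(-a_{i_1})\star Y$ contributes no degree-$e$ term, so $X'$ has strictly fewer degree-$e$ terms than $X$ (the two terms at $i_0,i_1$ are replaced by at most one), contradicting the minimality in the choice of $X$ — or, if no degree-$e$ term survives, contradicting the minimality of $e$. Hence $e\le d+1$, which closes the proof of the theorem.

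What remains, and what I expect to be the genuine obstacle, is the reduction lemma: \emph{for every monic monomial $\rho$ and all $p,p'\in\mathcal{P}_{\Ff}$ with $p^{\mathrm{top}}\ne(p')^{\mathrm{top}}$, writing $\lambda=\mathrm{lcm}(p^{\mathrm{top}},(p')^{\mathrm{top}})$, $u=\lambda/p^{\mathrm{top}}$, $u'=\lambda/(p')^{\mathrm{top}}$, the polynomial $\rho(up-u'p')$ admits a representation in $A_d$ of degree $\le\deg(\rho)+\deg(\lambda)-1$.} I would prove this by induction on $\deg(\lambda)$. If $\deg(\lambda)\le d+1$, then $up,u'p'\in\Vdd$ and hence $up-u'p'\in\Vdd$, of degree $\le\deg(\lambda)-1\le d$ because the leading monomials cancel; Lemma \ref{lemma:multiple} then gives a representation of $\rho(up-u'p')$ of exactly its degree, which is $\le\deg(\rho)+\deg(\lambda)-1$. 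If $\deg(\lambda)\ge d+2$, choose a variable $x_j$ with $\deg_{x_j}(p^{\mathrm{top}})<\deg_{x_j}((p')^{\mathrm{top}})$ (so $x_j\mid u$; write $u=x_ju_1$) and a variable $x_l\ne x_j$ with $\deg_{x_l}(p^{\mathrm{top}})>\deg_{x_l}((p')^{\mathrm{top}})$ (so $x_l\mid p^{\mathrm{top}}$); both exist since $p^{\mathrm{top}},(p')^{\mathrm{top}}$ are distinct of the same degree. Let $w=x_jp^{\mathrm{top}}/x_l$ and let $p''\in\mathcal{P}_{\Ff}$ have $(p'')^{\mathrm{top}}=w$; one checks that $\mathrm{lcm}(w,(p')^{\mathrm{top}})=\lambda/x_l$ has degree $\deg(\lambda)-1$, that $x_lp''-x_jp\in\Vdd$ has degree $\le d$, and that
\[
\rho(up-u'p')=\rho u_1(x_jp-x_lp'')+(\rho x_l)\bigl(u_1p''-\tfrac{u'}{x_l}p'\bigr).
\]
The first summand is handled directly by Lemma \ref{lemma:multiple}, and the second by the inductive hypothesis applied to the pair $p'',p'$ and the monomial $\rho x_l$; both representations have degree $\le\deg(\rho)+\deg(\lambda)-1$, and one adds them with $\oplus$. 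The delicate part throughout is keeping the degree of the representation under control while peeling one variable at a time off the least common multiple; once that bookkeeping is arranged, everything else is a routine use of the vector-space structure on $A_d$ recorded in Lemmas \ref{lemma:group}–\ref{lemma:sumofsets}.
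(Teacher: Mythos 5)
Your proof is correct and rests on the same mechanism as the paper's: pass to a representation in $A_{d,f}$ of minimal degree, locate a colliding pair of top-degree terms, and rewrite it using elements of $\mathcal{P}_{\Ff}$ whose tops differ by a single variable swap, the key point being that $x_{i}g-x_{j}p$ has degree at most $d$ and lies in $\Vdd$, so that Lemma \ref{lemma:multiple} controls the degree of its monomial multiples. The only divergence is organizational: the paper performs one variable swap per step and terminates via a degree-lexicographic order on the maximal monomial of the representation, whereas you package the full lcm-syzygy reduction into a separate lemma (itself proved by the same one-variable swap, by induction on the degree of the lcm) and terminate by counting top-degree terms; both bookkeeping schemes are sound.
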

\begin{proof}
    Let $\delta=\min_{X\in A_{d,f}}\deg X$ and $A_{\delta}=\{X\in A_{d,f}:\deg(X)=\delta\}$. By Proposition \ref{proposition:notemptyfamily} we have that $A_{d,f}\neq\emptyset$ and so $A_{\delta}\neq\emptyset$. For $X,Y\in A_{\delta}$ we say that $X\leq Y$ if the greatest monic monomial appearing in $X$ is smaller than the greatest monic monomial appearing in $Y$ (in  degree lexicographic order with: $x_1>x_2>\dots>x_n$). Let $X=\{(m_1,a_1,g_1),\dots, (m_{\ell},a_{\ell},g_{\ell})\}$ be a minimal element of $A_{\delta}$ according to this order.
    If $\delta\leq d+1$, then $X=\{(1,1,f)\}$ and this implies $f\in \Vdd$.
    
    If $\delta\geq d+2$, suppose for instance that $m_2=x_1^{b_1}\dots x_n^{b_n}$ is maximal among the monomials of $X$ and $g_2=x_1^{c_1}\dots x_n^{c_n}+\bar g_2$ where $\deg(\bar g_2)<d$. Since $\deg(f)\leq d+1$, there exists an index $i$ such that $(m_ig_i)^{\mathrm{top}}=(m_2g_2)^{\mathrm{top}}$ and $m_i< m_2$. This implies that
    $$\bar i=\min\{i:b_i\neq0\}<\max\{j:c_j\neq0\}=\bar j.$$ Let $p\in\mathcal{P}_{\Ff}$ such that $p^{\mathrm{top}}=x_1^{c_1}\dots x_{\bar i}^{c_{\bar i}+1}\dots x_{\bar j}^{c_{\bar j}-1}$ and $m=x_{\bar i}^{b_{\bar i}-1}\dots x_{\bar j}^{b_{\bar j}+1}\dots x_n^{b_n}$. Then, we can write $f$ as 
    $$f=\sum a_im_ig_i +a_2mp-a_2mp=x_{\bar i}^{b_{\bar i}-1}\dots x_{\bar j}^{b_{\bar j}}\dots x_n^{b_n}a_2(x_{\bar i}g_2-x_{\bar j}p)+\sum_{i\neq 2} a_im_ig_i+a_2mp.$$
    Notice that $\deg(x_{\bar i}g_2-x_{\bar j}p)\leq d$, and $g=x_{\bar i}g_2-x_{\bar j}p\in\Vdd$ since $g_2,p\in\Vd$. So, by Lemma~\ref{lemma:multiple}, there exists $X_1\in A_{d,a_2x_{\bar i}^{b_{\bar i}-1}\dots x_{\bar j}^{b_{\bar j}}\dots x_n^{b_n}g}$ such that $\deg(X_1)\leq \delta-1$. Let $h=\sum_{i\neq 2} a_im_ig_i$, then 
    $X_2=\{(1,1,0),(m_3,a_3,g_3),\dots,(m_\ell,a_{\ell},g_{\ell})\}\in A_{d, h}$ with $\deg(X_2)\leq\delta$. Obviously $X_3=\{(1,1,0),(m,a_2,p)\}\in A_{d,a_2mp}$. By Lemma \ref{lemma:sumofsets}, $X_1\oplus X_2\oplus X_3\in A_{d,f}$.  Since $m$ is smaller than $m_2$, one of the following is true:
    \begin{itemize}
        \item the maximal monomial in $X_1\oplus X_2\oplus X_3$ is equal to the maximal monomial in $X$, but it appears a strictly smaller number of times,
        \item the maximal monomial in $X_1\oplus X_2\oplus X_3$ is smaller than the maximal monomial in $X$.
    \end{itemize}
    Since at each step the number of monomials equal to $m_2$ strictly decreases, repeating this procedure a finite number of times, we find an $\tilde X$ in $A_{d,f}$ that has a smaller maximal monomial than $X$. This contradicts the minimality of $X$. Therefore, we conclude that $\delta\leq d+1$ and so $f\in\Vdd$.
\end{proof}
At this point the proof of Theorem \ref{theorem:boundsolvdeg} is straightforward.
\begin{proof}[Proof of theorem \ref{theorem:boundsolvdeg}]
If $\dreg(\Ff)=+\infty$, the statement is trivially true. Otherwise, by Remark~\ref{remark:grobnerdreg}, we know that $\mathrm{Gbd}_{\sigma}(\Ff)\leq \dreg(\Ff)$. Then, by Theorem \ref{theorem:finVdd}, we obtain that every element that belongs to a reduced Gr\"obner basis is in $\Vdd$. 
\end{proof}
The next example shows that the bound in Theorem \ref{theorem:boundsolvdeg} is optimal.
\begin{example}
Let $1<k\in\N$ and let $\Ff_k=\{x^k+y,y^k+x,xy\}$ a family of polynomial in $\mathbb{K}[x_1,\dots,x_n]$. We have that $\Ff_k^{\mathrm{top}}=\{x^k,y^k,xy\}$ and therefore $\dreg(\Ff_k)=k$. We notice that 
\begin{equation*}
    x=-y^{k-1}(x^k+y)+(y^k+x)+x^{k-1}y^{k-2}(xy),
\end{equation*}
and so $x\in(\Ff_k)$. By symmetry, we obtain that also $y\in(\Ff_k)$. In particular, we have that $\{x,y\}$ is a reduced Gr\"obner basis for $\Ff_k$ for every term order. On the one hand, we observe that $x,y\notin V_{\Ff_k,k}$. On the other hand, we have that
\begin{equation*}
    y^2=y(x^k+y)-x^{k-1}(xy)\in V_{\Ff_k,k+1}\implies x=(y^k+x)-y^{k-2}(y^2)\in V_{\Ff_k,k+1}\implies y\in V_{\Ff_k,k+1}.
\end{equation*}
So, $\{x,y\}\subseteq V_{\Ff_k,k+1}$. This implies that $\sd(\Ff_k)=k+1=\dreg(\Ff_k)+1$, independently of the choice of the degree-compatible term order $\sigma$.
\end{example}
The next proposition generalizes Theorem \ref{theorem:boundsolvdeg} to an arbitrary family of polynomials.
\begin{proposition}\label{corollary:theoremsd}
Let $\mathcal{F}=\{f_1,\dots,f_k\}$ be a family of polynomials and let $\sigma$ be a degree-compatible term order. Then, $$\sd\leq \max\{\dreg(\Ff)+1,\deg(f_1),\dots,\deg(f_k)\}.$$ 
\end{proposition}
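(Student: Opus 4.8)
The plan is to reduce to Theorem~\ref{theorem:boundsolvdeg} by replacing $\Ff$ with an equivalent system whose generators all have degree at most $\dreg(\Ff)$, while keeping everything inside $V_{\Ff,M}$, where $M:=\max_i\deg(f_i)$. If $\dreg(\Ff)=+\infty$ the bound is vacuous, so put $d:=\dreg(\Ff)<+\infty$. If $M\le d$, the hypothesis of Theorem~\ref{theorem:boundsolvdeg} holds and $\sd(\Ff)\le d+1=\max\{d+1,\deg f_1,\dots,\deg f_k\}$; hence I may assume $M\ge d+1$, so that $\max\{d+1,\deg f_1,\dots,\deg f_k\}=M$ and it suffices to show $\sd(\Ff)\le M$.

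First I would build the replacement system. A generator of degree $>d$ contributes nothing to the degree-$d$ component of $(\Ff^{\mathrm{top}})$, so with $\Ff_{\le d}:=\{f_i:\deg f_i\le d\}$ one has $\bigl((\Ff_{\le d})^{\mathrm{top}}\bigr)_d=(\Ff^{\mathrm{top}})_d=\mathbb{K}[x_1,\dots,x_n]_d$, and hence $\bigl((\Ff_{\le d})^{\mathrm{top}}\bigr)_e=\mathbb{K}[x_1,\dots,x_n]_e$ for every $e\ge d$. Using this I can reduce each $f_i$ with $\deg f_i>d$ modulo $\Ff_{\le d}$: subtracting a combination $\sum_j h_j f_j$ with $f_j\in\Ff_{\le d}$ and $h_j$ homogeneous, chosen to cancel the current leading form, strictly lowers the degree, and every such combination lies in $V_{\Ff,\deg f_i}$; after finitely many steps one reaches $r_i$ with $\deg r_i\le d$, $r_i\in V_{\Ff,\deg f_i}\subseteq V_{\Ff,M}$, and $f_i-r_i\in(\Ff_{\le d})$. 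Put $\mathcal{G}:=\Ff_{\le d}\cup\{r_i:\deg f_i>d\}$. Then $(\mathcal{G})=(\Ff)$, every element of $\mathcal{G}$ has degree $\le d$, $\mathcal{G}\subseteq V_{\Ff,M}$, and $\bigl(\mathcal{G}^{\mathrm{top}}\bigr)_d=\mathbb{K}[x_1,\dots,x_n]_d$, so in particular $\dreg(\mathcal{G})\le d$.

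Next I would rerun the argument leading to Theorem~\ref{theorem:boundsolvdeg} on $\mathcal{G}$ with the parameter $d$. The point to verify is that Lemmas~\ref{lemma:degreg}--\ref{lemma:multiple}, Proposition~\ref{proposition:notemptyfamily} and Theorem~\ref{theorem:finVdd} never use that $d$ is the \emph{smallest} integer with $(\Ff^{\mathrm{top}})_d=\mathbb{K}[x_1,\dots,x_n]_d$: they use only that this equality holds and that the generators have degree $\le d$, and both are true for $\mathcal{G}$ with this same $d$, even though $\dreg(\mathcal{G})$ may be strictly smaller. Thus Theorem~\ref{theorem:finVdd} applied to $\mathcal{G}$ shows that if $g\in(\mathcal{G})$ and $\deg g\le d+1$ then $g\in V_{\mathcal{G},d+1}$, and since $\mathrm{Gbd}_{\sigma}(\mathcal{G})\le\dreg(\mathcal{G})\le d$ by Remark~\ref{remark:grobnerdreg}, the proof of Theorem~\ref{theorem:boundsolvdeg} carries over and produces a Gr\"obner basis $\mathcal{B}$ of $(\mathcal{G})=(\Ff)$ with respect to $\sigma$ with $\mathcal{B}\subseteq V_{\mathcal{G},d+1}$.

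Finally I would transfer this back to $\Ff$: since $\mathcal{G}\subseteq V_{\Ff,M}$ and $d+1\le M$, the space $V_{\Ff,M}$ contains $\mathcal{G}$ and is stable under multiplication by monomials that keep the degree at most $d+1$, so minimality of $V_{\mathcal{G},d+1}$ gives $V_{\mathcal{G},d+1}\subseteq V_{\Ff,M}$. Hence $\mathcal{B}\subseteq V_{\Ff,M}$, and therefore $\sd(\Ff)\le M=\max\{\dreg(\Ff)+1,\deg f_1,\dots,\deg f_k\}$. The main obstacle I anticipate is the verification just mentioned — that the construction of $\mathcal{P}_{\Ff}$ and the whole chain up to Theorem~\ref{theorem:finVdd} apply to $\mathcal{G}$ with $d=\dreg(\Ff)$ rather than with $d=\dreg(\mathcal{G})$; the construction of $\mathcal{G}$ and the containments $\mathcal{G}\subseteq V_{\Ff,M}$, $V_{\mathcal{G},d+1}\subseteq V_{\Ff,M}$ are routine.
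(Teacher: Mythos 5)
Your argument is correct and shares the paper's overall architecture --- replace $\Ff$ by an equivalent system of degree at most $d=\dreg(\Ff)$ contained in $V_{\Ff,M}$, then feed it to Theorem~\ref{theorem:boundsolvdeg} --- but the reduction step is executed differently. The paper interreduces leading terms pairwise (replacing $f_i$ by $f_i-mf_j$ whenever $\mathrm{LT}(f_i)=m\,\mathrm{LT}(f_j)$) until no leading term divides another, and then asserts that the resulting family $\Ff'$ satisfies $\max\deg(f_i')\leq\dreg(\Ff')$, so that Theorem~\ref{theorem:boundsolvdeg} applies to $\Ff'$ verbatim. You instead reduce only the generators of degree $>d$, using the surjectivity $\bigl((\Ff_{\le d})^{\mathrm{top}}\bigr)_e=\mathbb{K}[x_1,\dots,x_n]_e$ for $e\geq d$ to cancel top forms; this yields a family $\mathcal{G}$ with all degrees $\leq d$ but possibly $\dreg(\mathcal{G})<d$, so you cannot cite Theorem~\ref{theorem:boundsolvdeg} as a black box and must instead observe that its proof (Lemmas~\ref{lemma:degreg}--\ref{lemma:multiple}, Proposition~\ref{proposition:notemptyfamily}, Theorem~\ref{theorem:finVdd}) uses only the containment $(\Ff^{\mathrm{top}})_d\supseteq\mathbb{K}[x_1,\dots,x_n]_d$ together with $\max\deg f_i\leq d$, and never the minimality of $d$. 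That observation --- the one point you flag as needing verification --- does check out, and it is what makes your route work; it also buys you independence from the paper's (less obviously justified) claim that LT-interreduction forces the maximal degree below the new degree of regularity, which your argument never needs. The price is that your proof re-opens the proof of Theorem~\ref{theorem:boundsolvdeg} rather than using its statement. Both arguments close identically, via the containment $V_{\mathcal{G},d+1}\subseteq V_{\Ff,M}$ (respectively $V_{\Ff',d'+1}\subseteq V_{\Ff,t}$) coming from the minimality in the definition of $V_{\mathcal{G},d+1}$.
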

\begin{proof}
    Let $\Ff=\{f_1,\dots,f_k\}$ be a family of polynomial, $t=\max\deg(f_i)$ and suppose that $\dreg(\Ff)<t$. We denote by $\mathrm{LT}(f)$ the leading term of $f$. If there exist indices $i,j$ and a monomial $m$ such that $\mathrm{LT}(f_i)=m\mathrm{LT}(f_j)$, we substitute $f_i$ with $f_i-mf_j$. By proceeding in this way, in a finite number of steps we find a family $\Ff'=\{f_1',\dots,f_s'\}\subseteq V_{\Ff,t}$ such that $(\Ff')=(\Ff)$ and for every pair of indices $i,j$ we have that $\mathrm{LT}(f_i)\nmid\mathrm{LT}(f_j)$. This implies that $\max \deg(f_i')\leq \dreg(\Ff')=d'\leq \dreg(\Ff)$. By Theorem \ref{theorem:boundsolvdeg}, we have that $V_{\Ff',d'+1}$ contains a Gr\"obner basis of $\Ff'$. Since $(\Ff)=(\Ff')$, $d'<t$, and $\Ff'\subseteq V_{\Ff,t}$, we conclude that $V_{\Ff,t}$ contains a Gr\"obner basis of $\Ff$.
\end{proof}
As direct consequence of the previous proposition, we obtain the following upper bound on the last fall degree. 
\begin{corollary}\label{corollary:lfddreg}
Let $\mathcal{F}=\{f_1,\dots,f_n\}$ be a family of polynomials. Then, $$\lfd(\Ff)\leq \max\{\dreg(\Ff)+1,\deg(f_1),\dots,\deg(f_k)\}.$$   
\end{corollary}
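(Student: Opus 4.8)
The plan is to read the bound off from Proposition~\ref{corollary:theoremsd} together with the Caminata--Gorla identity recalled in Theorem~\ref{theore:lfdgorla}; the whole argument is a short chain of inequalities. First I would dispose of the trivial case: if $\dreg(\Ff)=+\infty$ then the right-hand side of the asserted inequality equals $+\infty$ and there is nothing to prove, so from now on I may assume $\dreg(\Ff)<+\infty$.

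Next I would fix a degree-compatible term order $\sigma$ — any one will do, e.g.\ degrevlex — so that both Proposition~\ref{corollary:theoremsd} and Theorem~\ref{theore:lfdgorla} apply to it. Proposition~\ref{corollary:theoremsd} gives
\[
\sd(\Ff)\leq\max\{\dreg(\Ff)+1,\deg(f_1),\dots,\deg(f_k)\},
\]
and in particular $\sd(\Ff)<+\infty$. Theorem~\ref{theore:lfdgorla} then gives $\sd(\Ff)=\max\{\lfd(\Ff),\mathrm{Gbd}_{\sigma}(\Ff)\}$, so in particular $\lfd(\Ff)\leq\sd(\Ff)$. Combining the displayed inequality with this last observation yields the claim.

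I do not expect any genuine obstacle here; the only point requiring a moment's care is the role of the term order. Both cited results are phrased for a degree-compatible $\sigma$, and one should note that a single such $\sigma$ serves for both invocations, which is automatic since the hypotheses coincide. It is also worth remarking that $\lfd(\Ff)$ is defined solely through the spaces $V_{\Ff,d}$, which involve no term order, so the bound obtained for $\lfd(\Ff)$ is intrinsically independent of $\sigma$, exactly as the statement suggests. Apart from this bookkeeping, the corollary is a purely formal consequence of the two results already established.
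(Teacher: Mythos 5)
Your proof is correct and is essentially identical to the paper's: both chain $\lfd(\Ff)\leq\max\{\lfd(\Ff),\mathrm{Gbd}_{\sigma}(\Ff)\}=\sd(\Ff)$ from Theorem~\ref{theore:lfdgorla} with the bound from Proposition~\ref{corollary:theoremsd}. Your extra remarks on the infinite case and the independence from $\sigma$ are fine but not needed.
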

\begin{proof}
    By Theorem \ref{theore:lfdgorla} and Proposition \ref{corollary:theoremsd}, we obtain that
    \begin{equation*}
        \lfd(\Ff)\leq\max\{\lfd(\Ff),\mathrm{Gbd}_{\sigma}(\Ff)\}=\sd(\Ff)\leq \max\{\dreg(\Ff)+1,\deg(f_1),\dots,\deg(f_k)\}.\qedhere
    \end{equation*}
\end{proof}
In the next corollary, we provide an upper bound on the solving degree that is linear in both the number of variables and the degree of the elements of the system $\Ff$.
\begin{corollary}[Macaulay bound]
    Let $\Ff=\{f_1,\dots,f_k\}\subseteq\mathbb{K}[x_1,\dots,x_n]$ be a family of polynomial equations of degree at least $1$ such that $\dreg(\Ff)<\infty$ (and so $k\geq n$). Let $d_i=\deg(f_i)$ and suppose that $d_1\geq d_2\geq\dots\geq d_k$. Then, for every degree-compatible term order $\sigma$
    \begin{equation*}
        \sd(\Ff)\leq d_1+\dots+d_n-n+2.
    \end{equation*}
\end{corollary}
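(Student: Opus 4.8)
The plan is to combine Proposition~\ref{corollary:theoremsd} with a classical complete-intersection bound on the degree of regularity. By Proposition~\ref{corollary:theoremsd}, $\sd(\Ff)\le\max\{\dreg(\Ff)+1,d_1,\dots,d_k\}=\max\{\dreg(\Ff)+1,d_1\}$, since $d_1=\max_i d_i$. As every $d_i\ge 1$, the $n-1$ numbers $d_2,\dots,d_n$ sum to at least $n-1$, whence $d_1+\dots+d_n-n+2\ge d_1+1>d_1$. So it suffices to prove
\[
\dreg(\Ff)\le d_1+\dots+d_n-n+1 .
\]
The equality ``$(\Ff^{\mathrm{top}})_e=\mathbb{K}[x_1,\dots,x_n]_e$'' defining $\dreg$ asserts the surjectivity of an explicit $\mathbb{K}$-linear map between finite-dimensional spaces, and surjectivity of such a map is preserved and reflected by field extension; hence $\dreg(\Ff)$ is unchanged when $\mathbb{K}$ is enlarged, and we may assume $\mathbb{K}$ is infinite. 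Set $S=\mathbb{K}[x_1,\dots,x_n]$, $\mathfrak m=(x_1,\dots,x_n)$, and $I=(\Ff^{\mathrm{top}})=(f_1^{\mathrm{top}},\dots,f_k^{\mathrm{top}})$. Since $\dreg(\Ff)<\infty$, we have $S_e\subseteq I$ for all $e\ge\dreg(\Ff)$, so $\mathfrak m^{\dreg(\Ff)}\subseteq I$, hence $\sqrt I=\mathfrak m$ and $S/I$ is Artinian (in particular $k\ge n$).

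The heart of the argument is to exhibit a regular sequence of the prescribed degrees inside $I$. Set $g_i=f_i^{\mathrm{top}}+\sum_{j>i}h_{ij}f_j^{\mathrm{top}}$ for $1\le i\le n$, where each $h_{ij}$ is a form of degree $d_i-d_j\ge 0$ (recall $d_1\ge\dots\ge d_k$); then $g_i\in I$ and, for generic $h_{ij}$, $g_i$ is a nonzero form of degree exactly $d_i$. I claim that for a generic choice the $g_i$ form a regular sequence, and I would prove this by induction on $i$, assuming $g_1,\dots,g_{i-1}$ is already a regular sequence. Since $S$ is Cohen--Macaulay, $(g_1,\dots,g_{i-1})$ is unmixed, so its finitely many minimal primes all have height $i-1\le n-1$ and hence are distinct from $\mathfrak m$; thus it is enough to ensure $g_i\notin\mathfrak p$ for each such prime $\mathfrak p$. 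The key point is that the triangular shape of the $g_t$'s gives
\[
(g_1,\dots,g_{i-1})+(f_i^{\mathrm{top}},\dots,f_k^{\mathrm{top}})=(f_1^{\mathrm{top}},\dots,f_k^{\mathrm{top}})=I ,
\]
so $(f_i^{\mathrm{top}},\dots,f_k^{\mathrm{top}})\not\subseteq\mathfrak p$: otherwise $I\subseteq\mathfrak p$, forcing $\mathfrak p\supseteq\sqrt I=\mathfrak m$, a contradiction. All of $f_i^{\mathrm{top}},\dots,f_k^{\mathrm{top}}$ have degree $\le d_i$, and since $\mathbb{K}$ is infinite there is a linear form outside $\mathfrak p$; multiplying a generator outside $\mathfrak p$ by a suitable power of such a form shows the degree-$d_i$ component $(f_i^{\mathrm{top}},\dots,f_k^{\mathrm{top}})_{d_i}$ is not contained in $\mathfrak p$ either. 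Consequently the affine subspace $f_i^{\mathrm{top}}+(f_{i+1}^{\mathrm{top}},\dots,f_k^{\mathrm{top}})_{d_i}$ in which $g_i$ varies is not contained in $\mathfrak p$ (if it were, then $(f_i^{\mathrm{top}},\dots,f_k^{\mathrm{top}})_{d_i}$ would be), so a generic $g_i$ lies outside $\mathfrak p$; handling the finitely many $\mathfrak p$ simultaneously completes the induction.

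Granting this, $(g_1,\dots,g_n)$ is a complete intersection contained in $I$, so the Koszul resolution gives the Hilbert series of $S/(g_1,\dots,g_n)$ as $\prod_{i=1}^n(1+t+\dots+t^{d_i-1})$, a polynomial of degree $d_1+\dots+d_n-n$. Hence $(g_1,\dots,g_n)_e=S_e$ for every $e\ge d_1+\dots+d_n-n+1$, so a fortiori $(\Ff^{\mathrm{top}})_e=S_e$ for such $e$, which yields $\dreg(\Ff)\le d_1+\dots+d_n-n+1$; combined with the first paragraph this proves the bound. I expect the only genuine difficulty to be the regular-sequence construction — using the triangular shape and the identity above, together with the infinitude of $\mathbb{K}$, to guarantee that a generic $g_i$ avoids all the relevant minimal primes — while the reduction and the final Koszul computation are routine.
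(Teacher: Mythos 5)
Your proposal is correct and follows essentially the same route as the paper: reduce via Proposition~\ref{corollary:theoremsd} to the classical bound $\dreg(\Ff)\leq d_1+\dots+d_n-n+1$, after noting that $d_1+\dots+d_n-n+2\geq d_1+1$. The only difference is that the paper simply asserts the existence of a subset $J$ with $\dreg(\Ff)\leq\sum_{j\in J}d_j-n+1$, whereas you supply a full proof of that ingredient (reduction to an infinite field, a generic triangular regular sequence inside $(\Ff^{\mathrm{top}})$, and the Koszul/Hilbert-series computation), and that argument is sound.
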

\begin{proof}
    Since $\dreg(\Ff)<\infty$, there exists $J=\{j_1,\dots,j_n\}\subseteq[k]$ such that
    $\dreg(\Ff)\leq d_{j_1}+\dots d_{j_n}-n+1$.
    Since we assumed that $d_1\geq d_2\geq\dots\geq d_k$, we have that 
    $\dreg(\Ff)\leq d_{1}+\dots d_{n}-n+1$. Moreover for every $f\in \Ff$ we have that 
    $$\deg(f)\leq d_1\leq d_1+(d_2-1)+\dots+(d_n-1)=d_{1}+\dots d_{n}-n+1.$$
    We conclude applying Proposition \ref{corollary:theoremsd}.
\end{proof}
The Macaulay bound was originally proved by Lazard in \cite[Theorem 3]{Laz} for the maximum degree of an element of a reduced Gr\"obner basis of $\Ff$. Notice that our bound (when it can be applied) is stronger. In fact, it depends only on the degree of $n$ polynomials, while the bound of Lazard depends on the degree of $n+1$ polynomials. A similar but weaker bound was given in \cite[Corollary 3.25]{CG}.

We conclude with an explicit upper bound on the complexity of finding a reduced Gr\"obner basis (with respect to a degree-compatible term order) of a system of polynomial equations in terms of the degree of regularity. For instance, this bound applies when we use algorithms such as  MutantXL, MXL2, and similar variants.  
\begin{proposition}
    Let $\Ff=\{f_1,\dots,f_k\}\subseteq\mathbb{K}[x_1,\dots,x_n]$ be a family such that $\deg(f_i)\leq\dreg(\Ff)=d$. Then, a Gr\"obner basis of $\Ff$ can be find in $\mathcal{O}\left((n+1)^{4(d+1)}\right)$ operations in $\mathbb{K}$.
\end{proposition}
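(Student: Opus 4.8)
The plan is to turn the bound $\sd(\Ff)\le\dreg(\Ff)+1=d+1$ of Theorem~\ref{theorem:boundsolvdeg} into a concrete running time for an XL-type algorithm such as MutantXL. By the definition of the solving degree, $V_{\Ff,d+1}$ already contains a Gr\"obner basis of $(\Ff)$ with respect to $\sigma$, and $V_{\Ff,d+1}$ is a finite-dimensional $\K$-subspace of the polynomials of degree at most $d+1$. Hence it suffices to (i) compute a $\K$-basis of $V_{\Ff,d+1}$, and then (ii) put it in $\sigma$-echelon form and output the rows. For step (ii): after echelonization the leading monomials of the basis span the same monomial ideal as the $\mathrm{LT}(v)$ over all nonzero $v\in V_{\Ff,d+1}$; this ideal is contained in $\mathrm{LT}\big((\Ff)\big)$ because $V_{\Ff,d+1}\subseteq(\Ff)$, and contains it because $V_{\Ff,d+1}$ contains a Gr\"obner basis of $(\Ff)$, so the two coincide; since the same rows also generate $(\Ff)$, they form a Gr\"obner basis (a further interreduction, still polynomial in the number of monomials, yields the reduced one). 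Step (ii) is a single Gaussian elimination and will be dominated by step (i).

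For step (i) I would run the incremental procedure behind MutantXL. Index the columns of a working matrix by the monomials of degree at most $d+1$ in $x_1,\dots,x_n$, with the rows spanning the current subspace; initialize with $f_1,\dots,f_k$, and in each round append the rows $x_jg$ for every pivot row $g$ and every variable $x_j$ with $\deg(x_jg)\le d+1$, row-reduce (say, with respect to $\sigma$), and stop once a round produces no new pivot; a routine argument shows the fixed point is exactly $V_{\Ff,d+1}$. Put $N=\binom{n+d+1}{d+1}$, the number of columns; since $N=\prod_{i=1}^{d+1}\frac{n+i}{i}$ and $\frac{n+i}{i}\le n+1$ for $i\ge 1$, we get $N\le(n+1)^{d+1}$ --- this homogenization count is what puts the base $n+1$ in the statement. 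One round acts on a matrix with at most $N$ pivot rows, hence at most $(n+1)N$ rows, and at most $N$ columns, so its row reduction costs $\mathcal{O}\big((n+1)N\cdot N^{2}\big)=\mathcal{O}\big((n+1)^{3d+4}\big)$ operations in $\K$.

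What remains --- and what I expect to need the most care --- is the bound on the number of rounds. I would argue by induction on $r$ that after $r$ rounds the row span contains every $mf_i$ with $m$ a monomial, $\deg(m)\le r$, and $\deg(mf_i)\le d+1$: such an $mf_i$ is reached from $f_i$ by multiplying by the variables of $m$ one at a time, and, degrees being nondecreasing under multiplication by a monomial, every intermediate product already has degree $\le d+1$ and is produced along the way. Setting aside the trivial case in which some $f_i$ is a nonzero constant (then $(\Ff)$ is the whole ring and $\{1\}$ is a Gr\"obner basis), every relevant multiplier satisfies $\deg(m)\le d+1-\deg(f_i)\le d$, so after $d$ rounds the span is all of $V_{\Ff,d+1}$ and the loop halts in at most $d+1$ rounds. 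Using $d+1\le 2^{d}\le(n+1)^{d}$ for $n\ge 1$, the total is $\mathcal{O}\big((d+1)(n+1)^{3d+4}\big)=\mathcal{O}\big((n+1)^{4(d+1)}\big)$; the initial reduction of the $k$ input rows adds only $\mathcal{O}(kN^{2})$, which is absorbed into this estimate as soon as $k\le(n+1)^{2(d+1)}$, in particular in every case of interest. Thus there is no single deep step: the work is to make the echelon-form Gr\"obner criterion, the estimate $\binom{n+d+1}{d+1}\le(n+1)^{d+1}$, and the $O(d)$ round count fit together so that the exponents add up to exactly $4(d+1)$.
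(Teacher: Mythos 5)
Your overall strategy coincides with the paper's: reduce to computing a $\K$-basis of $\Vdd$ via Theorem \ref{theorem:boundsolvdeg}, do linear algebra on the $N=\binom{n+d+1}{d+1}\leq(n+1)^{d+1}$ monomials of degree at most $d+1$, and read off a Gr\"obner basis from the echelonized result. Step (ii) of your argument (echelon form of a spanning set of $\Vdd$ yields a Gr\"obner basis) is fine. The problem is your bound on the number of rounds, and it is a genuine gap, not a presentational one.

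Your induction shows that after $r$ rounds the row span contains every $mf_i$ with $\deg(m)\leq r$ and $\deg(mf_i)\leq d+1$, i.e.\ after $d$ rounds it contains the span of $\{mf_i:\deg(mf_i)\leq d+1\}$ --- the row space of the degree-$(d+1)$ Macaulay matrix. But $\Vdd$ is defined as the smallest space closed under multiplying \emph{any of its elements} by monomials (subject to the degree cap), not just the original $f_i$. Low-degree elements produced by cancellation (the ``mutants'' that MutantXL is named for) must themselves be multiplied back up, those products can produce further mutants, and so on; this is exactly why $\Vdd$ can strictly contain the Macaulay span. The paper's own Example makes this concrete for $\Ff_k=\{x^k+y,y^k+x,xy\}$: the element $x$ is obtained only by first producing the mutant $y^2=y(x^k+y)-x^{k-1}(xy)$ and then multiplying it by $y^{k-2}$; the monomial $y^k=y^{k-2}\cdot y^2$ is not a $\K$-combination of products $mf_i$ of degree $\leq k+1$. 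So ``after $d$ rounds the span is all of $V_{\Ff,d+1}$'' does not follow from your induction, and the conclusion that the loop halts in $d+1$ rounds is unsupported. The only round bound your setup yields is $N$ (each non-terminal round adds a pivot), which turns your per-round cost $\mathcal{O}\bigl((n+1)N^{3}\bigr)$ into a total of $\mathcal{O}\bigl((n+1)N^{4}\bigr)$, and that exceeds the claimed $\mathcal{O}\bigl((n+1)^{4(d+1)}\bigr)$.

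The paper avoids counting rounds altogether: it keeps a worklist $\mathcal{M}$ of monomial multiples of current basis elements, reduces one row at a time against the at most $N$ pivots at cost $\mathcal{O}(N^{2})$ per row, and observes that each of the at most $N$ pivots contributes at most $N$ rows to $\mathcal{M}$, for $N^{2}$ rows in total and $\mathcal{O}(N^{4})$ overall. You can repair your write-up the same way (amortize over appended rows rather than over rounds); in fact, if in each round you append and reduce only the variable-multiples of \emph{newly created} pivots, each pivot spawns at most $n$ rows, the total number of rows ever processed is $\mathcal{O}((n+1)N)$, and you even land at $\mathcal{O}\bigl((n+1)N^{3}\bigr)$. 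But as written, the ``$d+1$ rounds'' claim is false in spirit and unproven in fact, and the stated complexity does not follow.
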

\begin{proof}
    Let $N=\binom{d+1+n}{d+1}$ be the number of monomials of degree at most $d=\dreg(\Ff)$. By Theorem \ref{theorem:boundsolvdeg}, to construct a Gr\"obner basis, it is sufficient to find a basis of $\Vdd$. In order to do that we proceed as follows. With linear operations we can ensure that $\mathrm{LT}(f_i)\neq\mathrm{LT}(f_j)$ for all $f_i,f_j$. This is equivalent to computing the reduced row echelon form of a matrix with $N$ columns and $N$ rows, which can be done in $\mathcal{O}(N^3)$ operations in $\mathbb{K}$. We set $\mathcal{B}=\Ff$ and $\mathcal{M}=\{mf_i:f_i\in\Ff,\text{ m monomial of degree at least 1, and }\deg(mf)\leq d+1\}$. At each step, we pick an element $f$ of $\mathcal{M}$, and with linear operations involving only multiple of elements of $\mathcal{B}$ we reduce $f$ until we obtain an element $\bar f$ such that either $\mathrm{LT}(\bar f)=0$ or $\mathrm{LT}(\bar f)\notin\{\mathrm{LT}(b):b\in\mathcal{B}\}$. We remove $f$ from $\mathcal{M}$. Then, if $\mathrm{LT}(\bar f)\notin\{\mathrm{LT}(b):b\in\mathcal{B}\}$, we add $\bar f$ to $\mathcal{B}$ and we add to $\mathcal{M}$ all $m\bar f$ such that $m$ is a monomial of degree at least $1$, and $\deg(m\bar f)\leq d+1$. This operation costs $\mathcal{O}(N^2)$ operations. We repeat this operation until either $\lvert\mathcal{B}\rvert=N$ or $\lvert\mathcal{M}\rvert=0$. Every time that we add a new element to $\mathcal{B}$, we add at most $N$ elements to $\mathcal{M}$. Since in $\mathcal{B}$ we can add at most $N$ elements, this algorithm terminates in at most $N^2$ steps. The cost of each step  is $\mathcal{O}(N^2)$, therefore we have that the a Gr\"obner basis of $\Ff$ can be find in $\mathcal{O}(N^4)$ operations. Since $N\leq (n+1)^{d+1}$, we conclude.
\end{proof}
\section*{Acknowledgement}
The author is grateful to Elisa Gorla for her valuable suggestions.
\bibliographystyle{plain}	
\bibliography{main}

\begin{thebibliography}{10}

\bibitem{Bar04}
Magali Bardet.
\newblock {\em Étude des systèmes algébriques surdéterminés. Applications
  aux codes correcteurs et à la cryptographie}.
\newblock PhD thesis, Université Paris VI, 2004.

\bibitem{BNGMT}
M.~Bigdeli, E.~De Negri, M.~M. Dizdarevic, E.~Gorla, R.~Minko, and S.~Tsakou.
\newblock Semi-regular sequences and other random systems of equations.
\newblock Cryptology ePrint Archive, Paper 2020/1375, 2020.
\newblock \url{https://eprint.iacr.org/2020/1375}.

\bibitem{Buchmann09}
Johannes~A. Buchmann, Jintai Ding, Mohamed Saied~Emam Mohamed, and Wael Said
  Abd~Elmageed Mohamed.
\newblock {MutantXL: Solving Multivariate Polynomial Equations for
  Cryptanalysis}.
\newblock In Helena Handschuh, Stefan Lucks, Bart Preneel, and Phillip Rogaway,
  editors, {\em Symmetric Cryptography}, volume 9031 of {\em Dagstuhl Seminar
  Proceedings (DagSemProc)}, pages 1--7, Dagstuhl, Germany, 2009. Schloss
  Dagstuhl -- Leibniz-Zentrum f{\"u}r Informatik.

\bibitem{CG}
Alessio Caminata and Elisa Gorla.
\newblock Solving multivariate polynomial systems and an invariant from
  commutative algebra.
\newblock In Jean~Claude Bajard and Alev Topuzo{\u{g}}lu, editors, {\em
  Arithmetic of Finite Fields}, pages 3--36, Cham, 2021. Springer International
  Publishing.

\bibitem{CG22}
Alessio Caminata and Elisa Gorla.
\newblock Solving degree, last fall degree, and related invariants.
\newblock {\em Journal of Symbolic Computation}, 114:322--335, 2023.

\bibitem{Courtois}
Nicolas~T. Courtois, Alexander Klimov, Jacques Patarin, and Adi Shamir.
\newblock Efficient algorithms for solving overdefined systems of multivariate
  polynomial equations.
\newblock In {\em International Conference on the Theory and Application of
  Cryptographic Techniques}, 2000.

\bibitem{F4}
Jean-Charles Faug{\`e}re.
\newblock {A new efficient algorithm for computing Gr{\"o}bner bases ({F}4)}.
\newblock {\em {Journal of Pure and Applied Algebra}}, 139(1-3):61--88, June
  1999.

\bibitem{F5}
Jean~Charles Faug\`{e}re.
\newblock A new efficient algorithm for computing gr\"{o}bner bases without
  reduction to zero ({F}5).
\newblock In {\em Proceedings of the 2002 International Symposium on Symbolic
  and Algebraic Computation}, ISSAC '02, page 75–83, New York, NY, USA, 2002.
  Association for Computing Machinery.

\bibitem{BFS}
Jean-Charles Faug{\`e}re, Magali Bardet, and Bruno Salvy.
\newblock On the complexity of {G}r{\"o}bner basis computation of semi-regular
  overdetermined algebraic equations.
\newblock 2004.

\bibitem{GMP22}
Elisa Gorla, Daniela Mueller, and Christophe Petit.
\newblock Stronger bounds on the cost of computing {G}röbner bases for {HFE}
  systems.
\newblock {\em Journal of Symbolic Computation}, 109:386--398, 2022.

\bibitem{MHKY18}
Ming-Deh~A. Huang, Michiel Kosters, Yun Yang, and Sze~Ling Yeo.
\newblock On the last fall degree of zero-dimensional {W}eil descent systems.
\newblock {\em Journal of Symbolic Computation}, 87:207--226, 2018.

\bibitem{MHKY15}
Ming-Deh~A. Huang, Michiel Kosters, and Sze~Ling Yeo.
\newblock Last fall degree, {HFE}, and {W}eil descent attacks on {ECDLP}.
\newblock Cryptology ePrint Archive, Paper 2015/573, 2015.
\newblock \url{https://eprint.iacr.org/2015/573}.

\bibitem{Laz}
D.~Lazard.
\newblock Gr{\"o}bner bases, gaussian elimination and resolution of systems of
  algebraic equations.
\newblock In J.~A. van Hulzen, editor, {\em Computer Algebra}, pages 146--156,
  Berlin, Heidelberg, 1983. Springer Berlin Heidelberg.

\bibitem{Buchmann08}
Mohamed Saied~Emam Mohamed, Wael Said Abd~Elmageed Mohamed, Jintai Ding, and
  Johannes Buchmann.
\newblock {MXL}2: Solving polynomial equations over {GF}(2) using an improved
  mutant strategy.
\newblock In Johannes Buchmann and Jintai Ding, editors, {\em Post-Quantum
  Cryptography}, pages 203--215, Berlin, Heidelberg, 2008. Springer Berlin
  Heidelberg.

\bibitem{ST}
Igor Semaev and Andrea Tenti.
\newblock Probabilistic analysis on macaulay matrices over finite fields and
  complexity of constructing gröbner bases.
\newblock {\em Journal of Algebra}, 565:651--674, 2021.

\bibitem{Tenti}
Andrea Tenti.
\newblock {\em Sufficiently overdetermined random polynomial systems behave
  like semiregular ones}.
\newblock PhD thesis, University of Bergen, 2019.

\end{thebibliography}
\end{document}